\newtheorem{lemma}{Lemma}
\newtheorem{theorem}{Theorem}
\newtheorem{proposition}{Proposition}
\newtheorem{assumption}{Assumption}
\newtheorem{example}{Example}
\numberwithin{theorem}{section} 
\numberwithin{equation}{section}
\numberwithin{lemma}{section}
\numberwithin{example}{section}
\numberwithin{definition}{section}
\journal{Journal}
\begin{document}
\begin{frontmatter}

\title{An efficient preconditioner for evolutionary partial differential equations with $\theta$-method in time discretization}

\author{Yuan-Yuan Huang\textsuperscript{\textdagger}}\ead{doubleyuans@hkbu.edu.hk}
\author{Po Yin Fung\textsuperscript{\textdagger}}\ead{pyfung@hkbu.edu.hk}
\author{Sean Y. Hon\corref{cor1}\textsuperscript{\textdagger}}\ead{seanyshon@hkbu.edu.hk}
\author{Xue-Lei Lin\textsuperscript{\textdaggerdbl}}\ead{linxuelei@hit.edu.cn}

\cortext[cor1]{Corresponding author.}

\address{\textsuperscript{\textdaggerdbl}School of Science, Harbin Institute of Technology, Shenzhen 518055, China.}
\address{\textsuperscript{\textdagger}Department of Mathematics, Hong Kong Baptist University, Kowloon Tong, Hong Kong SAR.}

\begin{abstract}
In this study, the $\theta$-method is used for discretizing a class of evolutionary partial differential equations. Then, we transform the resultant all-at-once linear system and introduce a novel one-sided preconditioner, which can be fast implemented in a parallel-in-time way. By introducing an auxiliary two-sided preconditioned system, we provide theoretical insights into the relationship between the residuals of the generalized minimal residual (GMRES) method when applied to both one-sided and two-sided preconditioned systems. Moreover, we show that the condition number of the two-sided preconditioned matrix is uniformly bounded by a constant that is independent of the matrix size, which in turn implies that the convergence behavior of the GMRES method for the one-sided preconditioned system is guaranteed. Numerical experiments confirm the efficiency and robustness of the proposed preconditioning approach.
\end{abstract}

\begin{keyword}
All-at-once systems; $\theta$-method; preconditioning; multilevel Toeplitz matrices; Crank-Nicolson; parallel-in-time

\noindent{\it Mathematics Subject Classification:} 65F08, 65F10, 65M22, 15B05

\end{keyword}

\end{frontmatter}

\section{Introduction}
In this work, we concentrate on solving the following evolutionary partial differential equations (PDEs)
\begin{equation}\label{RSFDEs}
\begin{small}
\left\{
\begin{aligned}
&\frac{\partial u(\boldsymbol{x},t)}{\partial t} = \mathcal{L}u(\boldsymbol{x},t)+f(\boldsymbol{x},t),~~\boldsymbol{x}\in\Omega\subset \mathbb{R}^d,~~t\in(0,T],\\
&u(\boldsymbol{x},t)=0, \quad \boldsymbol{x}\in\partial\Omega,\\
&u(\boldsymbol{x},0)=\psi(\boldsymbol{x}),~\boldsymbol{x}\in\Omega,
\end{aligned}
\right.
\end{small}
\end{equation}
where $\Omega=\prod_{i=1}^d\left(\check{a}_i, \hat{a}_i\right)$ and $\partial\Omega$ denotes its boundary; $\boldsymbol{x}=\left(x_1, x_2, \ldots, x_d\right)$ is a point in $\mathbb{R}^d$. The form of equation (\ref{RSFDEs}) contains a class of equations for different choices of $\mathcal{L}$, including heat equations \cite{mcdonald2018preconditioning,lin2021all,hon2024sine}, space fractional diffusion equations \cite{lin2018efficient,donatelli2016spectral,she2023fast}, etc.

For solving linear system with Toeplitz structure arising from solving evolutionary PDEs, Krylov subspace methods are highly appropriate since the matrix-multiplications can be fast implemented via fast Fourier transforms (FFTs). However, the convergence is usually slow when the coefficient matrix is ill-conditioned. Therefore, preconditioning techniques are widely used to accelerate the convergence of the Krylov subspace methods. Here, we mention some existing preconditioners, for example,
$\tau$-preconditioners \cite{bini1990new,huang2021spectral}, circulant preconditioners \cite{lei2013circulant,lei2016multilevel}, approximate inverse preconditioners \cite{pan2014preconditioning}, structure-preserving preconditioners \cite{donatelli2016spectral}, banded preconditioners \cite{LYJ1,SLYL1}, and multigrid preconditioners \cite{moghaderi2017spectral}. In particular, $\tau$-preconditioner is getting increasingly popular in recent years since the spectra of the preconditioned matrices have been shown to be uniformly bounded by $1/2$ and $3/2$, indicating that the convergence of the preconditioned conjugate gradient method is mesh-independent \cite{huang2021spectral}. Based on this result, various fast preconditioning strategies related to $\tau$-approximation have emerged. Zhang et al. in \cite{zhang2023fast} designed a $\tau$-preconditioner to efficiently solve the discrete linear systems stemming from the fractional centered discretization of two-dimensional nonlinear Riesz space-fractional diﬀusion equations
(RSFDEs) and showed the spectra of the preconditioned matrices are independent of mesh sizes and far from zero. In \cite{huang2023tau}, Huang et al. proposed a $\tau$-preconditioner based on a fourth-order scheme and proved the uniformly bounded spectra of the preconditioned matrices for the orders $\alpha_1, \alpha_2 \in (1,1.4674)$. Later, the result was further improved by Qu et al. \cite{qu2024novel}. They adopted a novel fourth-order scheme and proposed a $\tau$-matrix based preconditioner. The uniform bounded spectrum has been proved for all $\alpha_1, \alpha_2 \in (1,2)$.  Other works related to fast solving RSFDEs, we refer readers to \cite{she2024tau,zhang2017high,zhu2021efficient,she2023fast} and the references therein. 

However, solving these conventional time-march schemes is highly time-consuming if the number of time steps is large. Therefore, all-at-once methods have been developed, which incorporate the discrete equations at all time levels into a large-scale linear system. Much work has been devoted to efficiently solving all-at-once evolutionary PDE systems. For example, when $\mathcal{L}$ is the Laplacian operator, McDonald et al. in \cite{mcdonald2018preconditioning} proposed a block circulant (BC) preconditioner to accelerate the convergence of Krylov subspace method. Lin and Ng in \cite{lin2021all} generalized the BC preconditioner by introducing a parameter $\alpha \in (0,1)$ into the top-right block of the BC preconditioner. Other related effective iterative algorithms, see the references \cite{hon2024sine,McDonald2017,liu2020fast,LiLinHonWU_2023,hondongSC2023}. 

Recently, for parallelly solving the time-space all-at-once linear system arising from the discretization of time fractional diffusion equations, efficient iterative solvers such as the generalized minimal residual (GMRES) method with the so-called two-sided preconditioning technique have been proposed in \cite{lin2021parallel,zhao2023bilateral} for non-local evolutionary equations with a weakly singular kernel, where the $\tau$-matrix is used to approximate the spatial discretization matrix such that the square root of the $\tau$-matrix approximation matrix is used as the right preconditioner, while the remainder is utilized as the left preconditioner. Theoretically, the condition numbers of the two-sided preconditioned matrices are proven to be uniformly bounded by constants independent of discretization step sizes. As a result, fast convergence of the GMRES solver for the two-sided preconditioned system can be achieved. 

Yet, the computational cost of solving the two-sided preconditioned system is notably higher compared to its one-sided counterpart \cite{lin2023single}. Developing theory ensuring the convergence of the one-sided preconditioned system is necessary. Fortunately, with the help of the theory of two-sided preconditioning technique, the effectiveness of the one-sided preconditioning technique can be demonstrated by further proving the relationship between the residuals of GMRES solver for the one-sided and two-sided preconditioned linear systems \cite{lin2023single,she2024tau}. 

Building on a similar idea, in this work, we are interested in constructing one-sided parallel-in-time preconditioners for the all-at-once system derived from a class of evolutionary PDEs. We utilize the $\theta$-scheme for the problem specified in equation \eqref{RSFDEs}, with the spatial operator $\mathcal{L}$ including (but not limited to) the following choices:
\begin{equation*}
\mathcal{L}=\left\{
\begin{aligned}
&\nabla \cdot(a(\boldsymbol{x})\nabla),~~~~~\text{variable coefficient Laplacian operator},\\
&\sum_{i=1}^d K_i \frac{\partial^{\alpha_i}}{\partial\left|x_i\right|^{\alpha_i}},~~\text{multi-dimensional Riesz fractional operator with $\alpha_i \in (1,2)$,}
\end{aligned}
\right.
\end{equation*}
and suppose the corresponding discretization matrix $\mathbf{G} \in \mathbb{R}^{N \times N}$ for $-\mathcal{L}$ is a symmetric positive definite (SPD) Toeplitz matrix. Theoretical guarantees about the convergence of GMRES solver with such one-sided preconditioners are also provided.

The remainder of this paper is organized as follows: Section \ref{Sec-2} presents the discretization of \eqref{RSFDEs} and the corresponding all-at-once linear system. Section \ref{sec:main} introduces the one-sided and the auxiliary two-sided preconditioned systems, and discusses the relationship between the residuals of the GMRES solver for both one-sided and two-sided preconditioned linear systems. Additionally, it is proven that the upper bound of the condition number for the GMRES solver of the two-sided preconditioned system remains constant, independent of the matrix size. Section \ref{sec:num} details the numerical experiments conducted to validate the efficiency and robustness of the proposed preconditioner. Conclusions are provided in Section \ref{sec:con}.

\section{Fully discretized all-at-once system}\label{Sec-2}
Let $\Delta t=T/M$ be the size of the time step with a positive integer $M$. We define the temporal partition $t_m=m\Delta t$ for $m=0,1,2, \ldots, M$. 
Let $h_i=(\hat{a}_i-\check{a}_i)/(n_i+1)$ be the size of spatial step for $i=1, \ldots, d$ with positive integer $n_i$.
For any $m, n \in \mathbb{N}$ with $m \leq n$, define the set $m \wedge n:= \{m, m+1, ..., n-1, n\}$ and denote
\begin{equation*}
N=\prod_{i=1}^d n_i, \quad n_1^{-}=n_l^{+}=1, \quad n_i^{-}=\prod_{j=1}^{i-1} n_j, i \in 2 \wedge d, \quad n_k^{+}=\prod_{j=k+1}^d n_j, k \in 1 \wedge(d-1) .
\end{equation*}
With the spatial discretization matrix $\mathbf{G} \in \mathbb{R}^{N \times N}$, we apply the $\theta$-method \cite{pan2016fast} on \eqref{RSFDEs} to obtain the following time stepping scheme
\begin{equation}\label{time_stepping}
\frac{\boldsymbol{u}^{m}-\boldsymbol{u}^{m-1}}{\Delta t} =-\theta \mathbf{G}\boldsymbol{u}^{m}-(1-\theta)\mathbf{G}\boldsymbol{u}^{m-1}+\mathbf{f}^{m,\theta},
\end{equation}
where $0 \leq \theta \leq 1$ is a weighting parameter and $\mathbf{f}^{m,\theta}=\mathbf{f}^{(m-1+\theta)\Delta t}$. Different values of $\theta$ correspond to different schemes, including the forward Euler, Crank-Nicolson and backward Euler schemes, where $\theta=0$, $\frac{1}{2}$ and 1, respectively. In the following, we focus on the values of $\theta\in[\frac{1}{2},1]$.
 
Clearly, \eqref{time_stepping} is equivalent to the following all-at-once system
\begin{align}\label{all_at_once_1}
\begin{scriptsize}
\left[\begin{array}{cccc}
\mathbf{I}_N+\theta \Delta t\mathbf{G} & & & \\
-(\mathbf{I}_N-(1-\theta)\Delta t\mathbf{G}) & \mathbf{I}_N+\theta \Delta t\mathbf{G} & & \\
& \ddots & \ddots & \\
& & -(\mathbf{I}_N-(1-\theta)\Delta t\mathbf{G}) & \mathbf{I}_N+\theta \Delta t\mathbf{G}
\end{array}\right]\left[\begin{array}{c}
\mathbf{u}^{1} \\
\mathbf{u}^{2} \\
\vdots \\
\mathbf{u}^{M}
\end{array}\right]=\left[\begin{array}{c}
(\mathbf{I}_N-(1-\theta) \Delta t\mathbf{G}) \mathbf{u}^{0}+ \mathbf{f}^{1,\theta} \\
\mathbf{f}^{2,\theta} \\
\vdots \\
\mathbf{f}^{M,\theta}
\end{array}\right].
\end{scriptsize}
\end{align}
Denote
\begin{equation*}
\mathbf{T}=\left[\begin{array}{cccc}
1 & & & \\
-1 & 1 & & \\
& \ddots & \ddots & \\
& & -1 & 1
\end{array}\right] \in \mathbb{R}^{M \times M},~\mathbf{H}_{\theta}=\Delta t\left[\begin{array}{cccc}
\theta & & & \\
1-\theta & \theta & & \\
& \ddots & \ddots & \\
& & 1-\theta & \theta
\end{array}\right] \in \mathbb{R}^{M \times M}.
\end{equation*}
Then, the coefficient matrix in \eqref{all_at_once_1} can be written in the form of Kronecker products
\begin{equation*}
\mathbf{\hat A} = \mathbf{H}_{\theta} \otimes \mathbf{G} + \mathbf{T} \otimes \mathbf{I}_N.
\end{equation*}
In order to compute the solution of (\ref{all_at_once_1}) in a parallel pattern, we exchange the order of the Kronecker product in $\mathbf{\hat A}$ such that the all-at-once system (\ref{all_at_once_1}) can be equivalently rewritten as
\begin{equation} \label{linear_sys_1}
\mathbf{\tilde Au}=\mathbf{\tilde f}
\end{equation}
with
\begin{equation*}
\mathbf{\tilde A} = \mathbf{G} \otimes \mathbf{H}_{\theta} + \mathbf{I}_{N} \otimes \mathbf{T}.
\end{equation*}

Here, by pre-multiplying $(\mathbf{I}_{N}\otimes \mathbf{H}_{\theta})^{-1}$ on both sides of \eqref{linear_sys_1}, the linear system \eqref{linear_sys_1} is equivalent to
\begin{equation}\label{linear_sys_2}
\mathbf{Au=f},
\end{equation}
where
\begin{align}\label{coef_matrix}
\mathbf{A} &= \mathbf{G} \otimes \mathbf{I}_M + \mathbf{I}_N \otimes \underbrace{\mathbf{H}_{\theta}^{-1}\mathbf{T}}_{=:\mathbf{Q}_{\theta}}
\end{align}
with
\begin{equation*}
\mathbf{Q}_{\theta}=\frac{1}{\Delta t}\left[\begin{array}{ccccc}
\frac{1}{\theta} & & & & \\
\frac{-1}{\theta^2} & \frac{1}{\theta} & & & \\
\frac{-(\theta-1)}{\theta^3} & \frac{-1}{\theta^2} & \frac{1}{\theta} & & \\
\vdots & \ddots & \ddots & \ddots & \\
\frac{-(\theta-1)^{M-2}}{\theta^n} & \cdots & \frac{-(\theta-1)}{\theta^3} & \frac{-1}{\theta^2} & \frac{1}{\theta}
\end{array}\right] \in \mathbb{R}^{M \times M},
\end{equation*}
and
\begin{equation*}
\mathbf{f} = (\mathbf{I}_N \otimes \mathbf{H}_{\theta})^{-1}\mathbf{\tilde f}.
\end{equation*}

\section{Main results}\label{sec:main}
In this section, we propose a parallel-in-time (PinT) preconditioner based on the following assumptions for the all-at-once linear system \eqref{linear_sys_2}.

\begin{assumption}\label{assumption_1}

For the SPD Toeplitz matrix $\mathbf{G}$, there exists a fast diagonalizable SPD matrix $\mathbf{\hat P}$ such that the minimum eigenvalue of $\mathbf{\hat P}$ has a lower bound independent of matrix size and the spectrum of the preconditioned matrix $\mathbf{\hat P^{-1} G}$ is uniformly bounded, i.e., 
\begin{itemize}
  \item [\bf{(i)}] $\mathbf{\hat P}$ is SPD and $\inf \limits _{N>0} \lambda_{\min }(\mathbf{\hat P}) \geq \check{c}>0$.
  \item [\bf{(ii)}] $\mathbf{\hat P}$ is fast diagonalizable. In other words, for the orthogonal diagonalization of $\mathbf{\hat P}$, $\mathbf{\hat P}=\mathbf{S}\mathbf{\Lambda}_{\mathbf{\hat P}}\mathbf{S}^{\top}$, both the orthogonal matrix $\mathbf{S}$ and its transpose $\mathbf{S}^{\top}$ have fast matrix-vector multiplications; the diagonal entries $\{\lambda_i\}_{i=1}^{N}$ of the diagonal matrix $\mathbf{\Lambda}_{\mathbf{\hat P}}=\text{diag}(\lambda_i)_{i=1}^N$ are fast computable.
  \item [\bf{(iii)}] $\sigma(\mathbf{\hat P^{-1} G}) \subset[\check{a}, \hat{a}]$ with $\check{a}$ and $\hat{a}$ being two positive constants independent of the matrix size parameter $N$.
\end{itemize}
\end{assumption}

The assumptions above can be easily satisfied. If the matrix ${\bf G}$ arises from a class of low-order discretization schemes \cite{MT2,CD2,SL1} of the multi-dimensional Riesz derivative $\mathcal{L}=\sum_{i=1}^d K_i \frac{\partial^{\alpha_i}}{\partial\left|x_i\right|^{\alpha_i}}$, then the well-known $\tau$ preconditioner \cite{huang2021spectral} is a valid candidate of $\mathbf{\hat P}$, which is both fast diagonalizable (by the multi-dimensional sine transform matrix) and SPD with its minimum eigenvalue bounded below by a constant independent of the matrix size (referring to \cite{lin2023single}); moreover, the spectra of the preconditioned matrices lie in the open interval $(1/2,3/2)$ \cite{huang2021spectral,zhang2023fast}. If ${\bf G}$ arises from the high-order discretization \cite{ding2023high} of the Riesz derivative, then the $\tau$-matrix based preconditioner proposed in \cite{qu2024novel} is a viable option of $\mathbf{\hat P}$ with fast matrix-vector multiplications and $\sigma(\mathbf{\hat P^{-1} G}) \subset (3/8,2)$. For the lower bound of minimum eigenvalue in Assumption \ref{assumption_1}$~{\bf(i)}$, see the discussion in Appendix. If the matrix ${\bf G}$ arises from the central difference discretization of the Laplacian operator $\mathcal{L}=\nabla \cdot(a(\boldsymbol{x})\nabla)$, then the constant-Laplacian matrix $\mathbf{L}_1$ (the discretization of $-\nabla^2$) is a suitable choice of $\mathbf{\hat P}$; it has been proved in \cite{lin2021parallel} that the constant-Laplacian matrix $\mathbf{L}_1$ meets Assumption \ref{assumption_1}$~{\bf(i)}$-${\bf (iii)}$.

Based on the assumptions on $\mathbf{\hat P}$, the proposed PinT preconditioner for the coefficient matrix $\mathbf{A}$ in (\ref{linear_sys_2}) is defined as
\begin{equation}\label{pre_coefficient}
\mathbf{P}_{\omega}:=\omega\mathbf{\hat P} \otimes \mathbf{I}_M + \mathbf{I}_N \otimes \mathbf{Q}_{\theta},
\end{equation}
where $\omega$ is a constant and how to choose the value of $\omega$ will be illustrated in the end of this section.
Hence, the one-sided preconditioned system is given by
\begin{equation}\label{one_side_system}
\mathbf{P}_{\omega}^{-1} \mathbf{A} \mathbf{u}=\mathbf{P}_{\omega}^{-1} \mathbf{f}.
\end{equation}
Clearly, with the fast diagonalization of $\mathbf{\hat P}$ in the Assumption \ref{assumption_1}  {\bf{(ii)}}, we know that $\mathbf{P}$ can be block diagonalized in the following form
\begin{equation*}
\begin{aligned}
\mathbf{P}_{\omega}&=\left(\mathbf{S} \otimes \mathbf{I}_N\right)\left(\omega\mathbf{\Lambda}_{\mathbf{\hat P}} \otimes \mathbf{I}_M + \mathbf{I}_N \otimes \mathbf{Q}_{\theta}\right)\left(\mathbf{S}^{\top} \otimes \mathbf{I}_N\right)\\
&:=\left(\mathbf{S} \otimes \mathbf{I}_N\right) \text{blkdiag}(\mathbf{D}_i)_{i=1}^N\left(\mathbf{S}^{\top} \otimes \mathbf{I}_N\right)
\end{aligned}
\end{equation*}
with each diagonal block $\mathbf{D}_i=\omega\lambda_i\mathbf{I}_M + \mathbf{Q}_{\theta}$ being an invertible lower triangular Toeplitz (ILTT) matrix.
Hence, the matrix-vector multiplication $\mathbf{P_{\omega}^{-1}v}$ for a given vector $\mathbf{v}$ can be implemented via the following three steps:
$$
\begin{aligned}
&\text{Step 1: Compute}~~\dot{\mathbf{v}}=\left(\mathbf{S}^{\top} \otimes \mathbf{I}_N\right) {\mathbf{v}};\\
&\text{Step 2: Compute}~~ \ddot{\mathbf{v}}=\text{blkdiag}(\mathbf{D}_i^{-1})_{i=1}^N \dot{\mathbf{v}};\\
&\text{Step 3: Compute}~~ \dddot{\mathbf{v}}=\left(\mathbf{S} \otimes \mathbf{I}_N\right) \ddot{\mathbf{v}}.
\end{aligned}
$$
From Assumption \ref{assumption_1} {\bf{(ii)}}, both Steps 1 and 3 can be fast computed. Since the inverse of an ILTT matrix is still an ILTT matrix, then the first column of each block $\mathbf{D}_i^{-1}$ can be executed within $\mathcal{O}(M\log M)$ operations utilizing the divide-and-conquer algorithm proposed in \cite{commenges1984fast}, hence Step 2 can be efficiently computed in $\mathcal{O}(MN \log M)$ operations via FFTs. Therefore, all the 3 steps above can be fast implemented.

In what follows, we will show the preconditioning effect of $\mathbf{P}_{\omega}$ for the one-sided system \eqref{one_side_system} in two parts, by introducing the following auxiliary two-sided preconditioned system
\begin{align}\label{two_side_system}
& \mathbf{P}_{l,\omega}^{-1} \mathbf{A} \underbrace{\mathbf{P}_{r,\omega}^{-1} \hat{\mathbf{u}}}_{=:\mathbf{u}}=\mathbf{P}_{l,\omega}^{-1} \mathbf{f}
\end{align}
with
\begin{equation*}
\mathbf{P}_{l,\omega}=(\omega\mathbf{\hat P})^{\frac{1}{2}} \otimes \mathbf{I}_N+(\omega\mathbf{\hat P})^{-\frac{1}{2}} \otimes \mathbf{Q}_{\theta}, \quad \mathbf{P}_{r,\omega}=(\omega\mathbf{\hat P})^{\frac{1}{2}} \otimes \mathbf{I}_N.
\end{equation*}
Firstly, we show that the convergence of the GMRES solver for the one-sided preconditioned system \eqref{one_side_system} is supported by the convergence of that for the two-sided preconditioned system \eqref{two_side_system}. Secondly, the condition number of the two-sided preconditioned matrix is to be shown to have an upper bound independent of matrix sizes. As a result of the two parts, fast convergence of GMRES for \eqref{one_side_system} can be expected.

For a square matrix $\mathbf{E} \in \mathbb{R}^{m \times m}$ and a vector $\mathbf{x} \in \mathbb{R}^{m \times 1}$, a Krylov subspace of degree $j \geq 1$ is defined as follows
\begin{equation*}
\mathcal{K}_j(\mathbf{E}, \mathbf{x}):=\operatorname{span}\left\{\mathbf{x}, \mathbf{E x}, \mathbf{E}^2 \mathbf{x}, \ldots, \mathbf{E}^{j-1} \mathbf{x}\right\}.
\end{equation*}
For a set $\mathcal{S}$ and a point $z$, we denote
\begin{equation*}
z+\mathcal{S}:=\{z+x \mid x \in \mathcal{S}\} .
\end{equation*}

\begin{lemma}\cite{Saad1}\label{lemma:gmres_residual}
For a non-singular $n \times n$ real linear system $\mathbf{My=b}$, let $\mathbf{y}^{j}$ be the iteration solution by GMRES at $j$-th $(j \geq 1)$ iteration step with $\mathbf{y}^{0}$ as initial guess. Then, the $j$-th iteration solution $\mathbf{y}^{j}$ minimizes the residual error over the Krylov subspace $\mathcal{K}_j\left(\mathbf{M}, \mathbf{r}^{0}\right)$ with $\mathbf{r}^{0}=\mathbf{b}-\mathbf{My}^{0} $, i.e.,
\begin{equation*}
\mathbf{y}^{j}=\underset{\mathbf{v} \in \mathbf{y}^{0}+\mathcal{K}_j\left(\mathbf{M}, \mathbf{r}^{0}\right)}{\arg \min }\|\mathbf{b-Mv}\|_2 .
\end{equation*}
\end{lemma}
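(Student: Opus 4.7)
The plan is to establish this classical GMRES minimization property via the Arnoldi construction that defines the method, following the standard route in Saad's treatment. First I would construct an orthonormal basis of $\mathcal{K}_j(\mathbf{M},\mathbf{r}^0)$ by running the Arnoldi process initialized with $\mathbf{v}_1 = \mathbf{r}^0/\|\mathbf{r}^0\|_2$, producing orthonormal vectors $\mathbf{v}_1,\ldots,\mathbf{v}_{j+1}$ and an $(j{+}1)\times j$ upper Hessenberg matrix $\bar{\mathbf{H}}_j$ that satisfies the Arnoldi relation
\begin{equation*}
\mathbf{M}\mathbf{V}_j = \mathbf{V}_{j+1}\bar{\mathbf{H}}_j, \qquad \mathbf{V}_j = [\mathbf{v}_1,\ldots,\mathbf{v}_j],
\end{equation*}
with $\mathbf{V}_{j+1}^{\top}\mathbf{V}_{j+1} = \mathbf{I}_{j+1}$. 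Assuming non-singularity of $\mathbf{M}$, I would also note that the Arnoldi process cannot break down before the true solution lies in $\mathbf{y}^0 + \mathcal{K}_j(\mathbf{M},\mathbf{r}^0)$, so the dimension count is well defined.

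Next I would parametrize the affine search space: every $\mathbf{v} \in \mathbf{y}^0 + \mathcal{K}_j(\mathbf{M},\mathbf{r}^0)$ has a unique representation $\mathbf{v} = \mathbf{y}^0 + \mathbf{V}_j\mathbf{z}$ for some $\mathbf{z}\in\mathbb{R}^j$. Substituting into the residual and using the Arnoldi relation gives
\begin{equation*}
\mathbf{b} - \mathbf{M}\mathbf{v} = \mathbf{r}^0 - \mathbf{M}\mathbf{V}_j\mathbf{z} = \beta\mathbf{v}_1 - \mathbf{V}_{j+1}\bar{\mathbf{H}}_j\mathbf{z} = \mathbf{V}_{j+1}\bigl(\beta\mathbf{e}_1 - \bar{\mathbf{H}}_j\mathbf{z}\bigr),
\end{equation*}
with $\beta = \|\mathbf{r}^0\|_2$ and $\mathbf{e}_1$ the first canonical basis vector of $\mathbb{R}^{j+1}$. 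Invoking orthonormality of the columns of $\mathbf{V}_{j+1}$ reduces the residual norm to a low-dimensional quantity:
\begin{equation*}
\|\mathbf{b}-\mathbf{M}\mathbf{v}\|_2 = \bigl\|\beta\mathbf{e}_1 - \bar{\mathbf{H}}_j\mathbf{z}\bigr\|_2.
\end{equation*}

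Finally I would appeal to the definition of the GMRES iterate: at step $j$, GMRES is precisely the algorithm that selects $\mathbf{z}^j \in \mathbb{R}^j$ to minimize this small $(j{+}1)\times j$ least-squares residual (typically via an incrementally updated QR factorization of $\bar{\mathbf{H}}_j$ using Givens rotations) and then sets $\mathbf{y}^j = \mathbf{y}^0 + \mathbf{V}_j\mathbf{z}^j$. Combined with the identity above, this gives $\mathbf{y}^j = \arg\min_{\mathbf{v}\in \mathbf{y}^0 + \mathcal{K}_j(\mathbf{M},\mathbf{r}^0)}\|\mathbf{b}-\mathbf{M}\mathbf{v}\|_2$, which is the claim. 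Since this is a textbook result attributed to Saad and the lemma is invoked only as a tool for later arguments, I do not anticipate a real obstacle; the only subtle point worth stating carefully is that well-definedness of the Arnoldi basis (no premature breakdown) combined with the non-singularity of $\mathbf{M}$ guarantees that the minimizer is unique, so the $\arg\min$ is well defined.
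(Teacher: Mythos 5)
Your proof is correct: the paper does not prove this lemma at all — it simply cites it as a known result from Saad's book — and your Arnoldi-based argument (orthonormal basis of the Krylov subspace, the relation $\mathbf{M}\mathbf{V}_j=\mathbf{V}_{j+1}\bar{\mathbf{H}}_j$, reduction of the residual norm to the small least-squares problem $\min_{\mathbf{z}}\|\beta\mathbf{e}_1-\bar{\mathbf{H}}_j\mathbf{z}\|_2$, and the definition of the GMRES iterate) is precisely the standard proof found in that reference. The only point to state slightly more carefully is the breakdown case: if Arnoldi terminates at some step $k<j$ with $h_{k+1,k}=0$, the Krylov subspace has dimension $k$ and (by non-singularity of $\mathbf{M}$) the exact solution already lies in $\mathbf{y}^0+\mathcal{K}_k$, so the minimization statement still holds trivially; this does not affect the validity of your argument.
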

The following theorem shows that the residual relationship between GMRES solver for one-sided preconditioned system and two-sided preconditioned system, which indicates the convergence of GMRES for one-sided preconditioned system \eqref{one_side_system} is assured by the convergence of GMRES solver for the two-sided preconditioned system \eqref{two_side_system}.

\begin{theorem}
Let $\hat{\mathbf{u}}^{0}$ be the initial guess for \eqref{two_side_system} and $\mathbf{u}^{0}=\mathbf{P}_{r,\omega}^{-1}\hat{\mathbf{u}}^{0}$ be the initial guess for \eqref{one_side_system}. Let $\mathbf{u}^{j}$ ($\hat{\mathbf{u}}^{j}$, respectively) be the $j$-th $(j\geq1)$ iteration solution derived by applying GMRES solver to \eqref{one_side_system} (\eqref{two_side_system}, respectively) with $\mathbf{u}^{0}$ ($\hat{\mathbf{u}}^{0}$, respectively) as their initial guess. Then,
\begin{equation*}
\left\|\mathbf{r}^{j}\right\|_2 \leq c\left\|\hat{\mathbf{r}}_j\right\|_2
\end{equation*}
where $\mathbf{r}^{j}:=\mathbf{P}_{\omega}^{-1} \mathbf{f}-\mathbf{P}_{\omega}^{-1} \mathbf{A} \mathbf{u}^{j}$ ($\hat{\mathbf{r}}^{j}:=\mathbf{P}_{l,\omega}^{-1} \mathbf{f}-\mathbf{P}_{l,\omega}^{-1} \mathbf{A} \mathbf{P}_{r,\omega}^{-1} \hat{\mathbf{u}}^{j}$, respectively) denotes the residual vector at the $j$-th GMRES iteration for \eqref{one_side_system} (\eqref{two_side_system}, respectively); $c=\frac{1}{\sqrt{\omega\check{c}}}$ and $\check{c}>0$ is the constant defined in Assumption \ref{assumption_1}  {\bf{(i)}} independent of both $M$ and $N$.
\end{theorem}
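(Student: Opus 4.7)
The plan is to exploit the factorization $\mathbf{P}_{\omega}=\mathbf{P}_{l,\omega}\mathbf{P}_{r,\omega}$, which is immediate from the Kronecker-product definitions, together with the GMRES optimality principle of Lemma \ref{lemma:gmres_residual}. The core idea is that any iterate produced by the two-sided GMRES can be transferred, via $\mathbf{P}_{r,\omega}^{-1}$, into an admissible candidate for the one-sided GMRES. Since the one-sided GMRES iterate is the residual-norm minimizer over its search space, evaluating its residual at this transferred iterate yields the desired inequality, up to a constant coming from $\|\mathbf{P}_{r,\omega}^{-1}\|_2$.

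Concretely, I would first verify the identity $\mathbf{P}_{\omega}=\mathbf{P}_{l,\omega}\mathbf{P}_{r,\omega}$ by direct expansion of the Kronecker products, using the commutativity of $\mathbf{I}_M$ with $\mathbf{Q}_{\theta}$. Next, I would introduce $\tilde{\mathbf{u}}^{j}:=\mathbf{P}_{r,\omega}^{-1}\hat{\mathbf{u}}^{j}$ and translate the two-sided residual: a short computation gives $\mathbf{P}_{\omega}^{-1}(\mathbf{f}-\mathbf{A}\tilde{\mathbf{u}}^{j})=\mathbf{P}_{r,\omega}^{-1}\hat{\mathbf{r}}^{j}$, and at $j=0$ it gives $\mathbf{r}^{0}=\mathbf{P}_{r,\omega}^{-1}\hat{\mathbf{r}}^{0}$ (using $\mathbf{u}^{0}=\mathbf{P}_{r,\omega}^{-1}\hat{\mathbf{u}}^{0}$).

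The crucial step is to show that the affine Krylov spaces are compatible under the $\mathbf{P}_{r,\omega}^{-1}$ map. Starting from $\hat{\mathbf{u}}^{j}\in\hat{\mathbf{u}}^{0}+\mathcal{K}_{j}(\mathbf{P}_{l,\omega}^{-1}\mathbf{A}\mathbf{P}_{r,\omega}^{-1},\hat{\mathbf{r}}^{0})$, I would use the telescoping identity
\begin{equation*}
\mathbf{P}_{r,\omega}^{-1}\bigl(\mathbf{P}_{l,\omega}^{-1}\mathbf{A}\mathbf{P}_{r,\omega}^{-1}\bigr)^{k}\hat{\mathbf{r}}^{0}
=\bigl(\mathbf{P}_{\omega}^{-1}\mathbf{A}\bigr)^{k}\mathbf{P}_{r,\omega}^{-1}\hat{\mathbf{r}}^{0}
=\bigl(\mathbf{P}_{\omega}^{-1}\mathbf{A}\bigr)^{k}\mathbf{r}^{0},
\end{equation*}
which together with the factorization gives $\tilde{\mathbf{u}}^{j}\in\mathbf{u}^{0}+\mathcal{K}_{j}(\mathbf{P}_{\omega}^{-1}\mathbf{A},\mathbf{r}^{0})$. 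Lemma \ref{lemma:gmres_residual} then yields
\begin{equation*}
\|\mathbf{r}^{j}\|_{2}\le \|\mathbf{P}_{\omega}^{-1}\mathbf{f}-\mathbf{P}_{\omega}^{-1}\mathbf{A}\tilde{\mathbf{u}}^{j}\|_{2}
=\|\mathbf{P}_{r,\omega}^{-1}\hat{\mathbf{r}}^{j}\|_{2}\le \|\mathbf{P}_{r,\omega}^{-1}\|_{2}\,\|\hat{\mathbf{r}}^{j}\|_{2}.
\end{equation*}

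Finally, I would estimate $\|\mathbf{P}_{r,\omega}^{-1}\|_{2}$. Since $\mathbf{P}_{r,\omega}=(\omega\mathbf{\hat P})^{1/2}\otimes\mathbf{I}_{M}$ is SPD, its inverse has spectral norm equal to $(\omega\,\lambda_{\min}(\mathbf{\hat P}))^{-1/2}$, and Assumption \ref{assumption_1}(i) bounds this by $1/\sqrt{\omega\check{c}}=c$ uniformly in $M$ and $N$. The only mildly subtle point is the Krylov-space identification, but it reduces cleanly to the telescoping identity above; the rest is bookkeeping with Kronecker products and the standard GMRES optimality.
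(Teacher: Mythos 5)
Your proof is correct and follows essentially the same route as the paper's: you apply the GMRES optimality principle (Lemma \ref{lemma:gmres_residual}) to transfer the two-sided iterate into the one-sided affine Krylov space via $\mathbf{P}_{r,\omega}^{-1}$, using the factorization $\mathbf{P}_{\omega}=\mathbf{P}_{l,\omega}\mathbf{P}_{r,\omega}$ (your telescoping identity is precisely the similarity $\mathbf{P}_{l,\omega}^{-1}\mathbf{A}\mathbf{P}_{r,\omega}^{-1}=\mathbf{P}_{r,\omega}(\mathbf{P}_{\omega}^{-1}\mathbf{A})\mathbf{P}_{r,\omega}^{-1}$ used in the paper), and then bound $\|\mathbf{P}_{r,\omega}^{-1}\|_2$ by $1/\sqrt{\omega\check{c}}$ from Assumption \ref{assumption_1}(i).
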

\begin{proof}
The direct application of Lemma \ref{lemma:gmres_residual} to the two-sided system leads to 
\begin{equation*}
\begin{aligned}
\hat{\mathbf{u}}^{j}-\hat{\mathbf{u}}^{0} &\in \mathcal{K}_j\left(\mathbf{P}_{l,\omega}^{-1} \mathbf{A} \mathbf{P}_{r,\omega}^{-1}, \hat{\mathbf{r}}^{0}\right)  ~~~~~~~ (\hat{\mathbf{r}}^{0}=\mathbf{P}_{l,\omega}^{-1} \mathbf{f}- \mathbf{P}_{l,\omega}^{-1} \mathbf{A} \mathbf{P}_{r,\omega}^{-1}\mathbf{\hat{u}}^{0})\\
& =\operatorname{span}\left\{\left(\mathbf{P}_{l,\omega}^{-1} \mathbf{A} \mathbf{P}_{r,\omega}^{-1}\right)^k\left(\mathbf{P}_{l,\omega}^{-1} \mathbf{f}- \mathbf{P}_{l,\omega}^{-1} \mathbf{A} \mathbf{P}_{r,\omega}^{-1}\mathbf{\hat{u}}^{0}\right)\right\}_{k=0}^{j-1} \\
& =\operatorname{span}\left\{\mathbf{P}_{r,\omega}\left(\mathbf{P_{\omega}^{-1} A}\right)^k \mathbf{P}_{r,\omega}^{-1}\left(\mathbf{P}_{l,\omega}^{-1} \mathbf{f}- \mathbf{P}_{l,\omega}^{-1} \mathbf{A} \mathbf{P}_{r,\omega}^{-1}\mathbf{\hat{u}}^{0}\right)\right\}_{k=0}^{j-1} \\
& =\operatorname{span}\left\{\mathbf{P}_{r,\omega}\left(\mathbf{P_{\omega}^{-1}A}\right)^k\left(\mathbf{P_{\omega}^{-1} f}- \mathbf{P_{\omega}^{-1} Au}^{0}\right)\right\}_{k=0}^{j-1}.
\end{aligned}
\end{equation*}
Then, we have
\begin{eqnarray*}
\mathbf{P}_{r,\omega}^{-1} \hat{\mathbf{u}}^{j}-\mathbf{u}^{0}&=&\mathbf{P}_{r,\omega}^{-1}\left(\hat{\mathbf{u}}^{j}-\hat{\mathbf{u}}^{0}\right) \\\nonumber
&\in& \operatorname{span}\left\{\left(\mathbf{P_{\omega}^{-1}A}\right)^k\left(\mathbf{P_{\omega}^{-1} f}- \mathbf{P_{\omega}^{-1} A}{\mathbf{u}}^{0}\right)\right\}_{k=0}^{j-1}\\\nonumber
&=&\mathcal{K}_j\left(\mathbf{P_{\omega}^{-1}A}, \mathbf{r}^{0}\right),
\end{eqnarray*}
which implies
\begin{equation*}
\mathbf{P}_{r,\omega}^{-1} \hat{\mathbf{u}}^{j} \in \mathbf{u}^{0}+\mathcal{K}_j\left(\mathbf{P_{\omega}^{-1}A}, \mathbf{r}^{0}\right) .
\end{equation*}
Again, for the one-sided system, Lemma \ref{lemma:gmres_residual} indicates that
\begin{equation*}
\mathbf{u}^{j}=\underset{\mathbf{v} \in \mathbf{u}^{0}+\mathcal{K}_j\left(\mathbf{P_{\omega}^{-1} A}, \mathbf{r}^{0}\right)}{\arg \min }\left\|\mathbf{P_{\omega}^{-1} f}- \mathbf{P_{\omega}^{-1} A}{\mathbf{v}}\right\|_2 .
\end{equation*}
Therefore,
\begin{equation*}
\begin{aligned}
\left\|\mathbf{r}^{j}\right\|_2=\left\|\mathbf{P_{\omega}^{-1} f}-\mathbf{P_{\omega}^{-1} A} \mathbf{u}^{j}\right\|_2 & \leq\left\|\mathbf{P_{\omega}^{-1} f}-\mathbf{P}_{l,\omega}^{-1} \mathbf{A} \mathbf{P}_{r,\omega}^{-1} \hat{\mathbf{u}}^{j}\right\|_2 \\
& =\left\|\mathbf{P}_{r,\omega}^{-1} \hat{\mathbf{r}}^{j}\right\|_2 \\
& =\sqrt{(\hat{\mathbf{r}}^{j})^{\top} \left((\omega\mathbf{\hat P})^{-1} \otimes \mathbf{I}_N\right) \hat{\mathbf{r}}^{j}} \\
&\leq\frac{1}{\sqrt{\omega\check{c}}}\left\|\hat{\mathbf{r}}^{j}\right\|_2  ~~~~~~~ (\mbox{by Assumption \ref{assumption_1} {\bf{(i)}}})\\
&:=c\left\|\hat{\mathbf{r}}^{j}\right\|_2.
\end{aligned}
\end{equation*}
The proof is complete.
\end{proof}
Before discussing the uniform bound of the condition number of the two-sided preconditioned matrix in \eqref{two_side_system}, we introduce some important preliminary results.

\begin{proposition}\cite{lin2018efficient}\label{main_ineq}
For positive numbers $\xi_i$ and $\zeta_i$ $(1\leq i \leq m)$, it holds that
\begin{equation*}
\min _{1 \leq i \leq m} \frac{\xi_i}{\zeta_i} \leq\left(\sum_{i=1}^m \zeta_i\right)^{-1}\left(\sum_{i=1}^m \xi_i\right) \leq \max _{1 \leq i \leq m} \frac{\xi_i}{\zeta_i}.
\end{equation*}
\end{proposition}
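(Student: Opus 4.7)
The plan is to prove the stated two-sided bound by a standard mediant / weighted-average argument: bound each individual ratio $\xi_i/\zeta_i$ between the extremes $a := \min_{1 \le i \le m} \xi_i/\zeta_i$ and $b := \max_{1 \le i \le m} \xi_i/\zeta_i$, clear the (positive) denominators to obtain termwise inequalities in the $\xi_i$ and $\zeta_i$ themselves, then sum over $i$ and divide by $\sum_{i=1}^m \zeta_i$.

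Concretely, first I would observe that by the very definitions of $a$ and $b$, for each index $i \in \{1, \ldots, m\}$ we have $a \le \xi_i/\zeta_i \le b$. Since each $\zeta_i > 0$, multiplication by $\zeta_i$ preserves both inequality signs and yields the termwise estimate $a\,\zeta_i \le \xi_i \le b\,\zeta_i$. Second, I would sum these termwise inequalities from $i=1$ to $i=m$ to obtain
$$a \sum_{i=1}^m \zeta_i \;\le\; \sum_{i=1}^m \xi_i \;\le\; b \sum_{i=1}^m \zeta_i.$$
Because all $\zeta_i$ are strictly positive, $\sum_{i=1}^m \zeta_i > 0$, and dividing through by this (positive) quantity is order-preserving, producing exactly the claimed bound
$$\min_{1 \le i \le m} \frac{\xi_i}{\zeta_i} \;\le\; \Bigl(\sum_{i=1}^m \zeta_i\Bigr)^{-1}\Bigl(\sum_{i=1}^m \xi_i\Bigr) \;\le\; \max_{1 \le i \le m} \frac{\xi_i}{\zeta_i}.$$

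I do not anticipate any real obstacle here; the argument is entirely elementary, and its single subtlety is simply that positivity of the $\zeta_i$ is used twice — once to preserve inequality directions under the termwise multiplication by $\zeta_i$, and once to guarantee that the final division by $\sum_{i=1}^m \zeta_i$ is both well-defined and order-preserving. Both uses are immediate from the hypothesis that $\zeta_i > 0$ for every $i$.
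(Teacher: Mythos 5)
Your proof is correct and complete. The paper states this proposition as a cited result from the reference \cite{lin2018efficient} without reproducing a proof, and your argument — bounding each ratio $\xi_i/\zeta_i$ by the extremes, multiplying by $\zeta_i>0$ to get termwise bounds, summing, and dividing by the positive total $\sum_i \zeta_i$ — is precisely the standard mediant-inequality derivation one would expect there.
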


For any real symmetric matrices $\mathbf{C}_1, \mathbf{C}_2 \in \mathbb{R}^{k \times k}$, denote $\mathbf{C}_2 \succ$ (or $\succeq$) $\mathbf{C}_1$ if $\mathbf{C_2-C_1}$ is positive
definite (or semi-definite). Especially, we denote $\mathbf{C}_2 \succ$ (or $\succeq$) $\mathbf{O}$, if $\mathbf{C}_2$ itself is positive definite
(or semi-definite). Also, $\mathbf{C}_1 \prec$ (or $\preceq$) $\mathbf{C}_2$ and $\mathbf{O} \prec$ (or $\preceq$) $\mathbf{C}_2$  have the same meanings as those
of $\mathbf{C}_2 \succ$ (or $\succeq$) $\mathbf{C}_1$ and $\mathbf{C}_2 \succ$ (or $\succeq$) $\mathbf{O}$, respectively.

\begin{lemma}\cite{lin2021parallel}\label{ineq_inv_Toep}
Let $\mathbf{B}_1, \mathbf{B}_2 \in \mathbb{R}^{k \times k}$ be real symmetric matrices such that $\mathbf{O} \prec \mathbf{B}_1 \preceq \mathbf{B}_2$. Then, $\mathbf{O} \prec \mathbf{B}_2^{-1} \preceq \mathbf{B}_1^{-1}$.
\end{lemma}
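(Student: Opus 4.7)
The plan is to reduce the Loewner matrix inequality to a statement about eigenvalues by exploiting the existence of a positive definite square root of $\mathbf{B}_1$. Since $\mathbf{B}_1 \succ \mathbf{O}$, the matrix $\mathbf{B}_1$ admits a unique SPD square root $\mathbf{B}_1^{1/2}$, which is invertible; moreover $\mathbf{B}_2 \succeq \mathbf{B}_1 \succ \mathbf{O}$ forces $\mathbf{B}_2$ to be SPD as well, so $\mathbf{B}_2^{-1}$ exists and is itself SPD, which immediately gives the assertion $\mathbf{O} \prec \mathbf{B}_2^{-1}$.

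To establish the remaining bound $\mathbf{B}_2^{-1} \preceq \mathbf{B}_1^{-1}$, I would first conjugate the hypothesis $\mathbf{B}_1 \preceq \mathbf{B}_2$ by the symmetric invertible matrix $\mathbf{B}_1^{-1/2}$, obtaining $\mathbf{I}_k \preceq \mathbf{B}_1^{-1/2} \mathbf{B}_2 \mathbf{B}_1^{-1/2} =: \mathbf{C}$. The matrix $\mathbf{C}$ is SPD with every eigenvalue at least $1$, hence its inverse $\mathbf{C}^{-1}$ has every eigenvalue at most $1$, giving $\mathbf{C}^{-1} \preceq \mathbf{I}_k$. Rewriting $\mathbf{C}^{-1} = \mathbf{B}_1^{1/2} \mathbf{B}_2^{-1} \mathbf{B}_1^{1/2}$ turns this into $\mathbf{B}_1^{1/2} \mathbf{B}_2^{-1} \mathbf{B}_1^{1/2} \preceq \mathbf{I}_k$, and a second conjugation by $\mathbf{B}_1^{-1/2}$ on both sides then yields $\mathbf{B}_2^{-1} \preceq \mathbf{B}_1^{-1}$, as desired.

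The only auxiliary fact invoked is the legitimacy of conjugating a Loewner inequality by a symmetric invertible matrix: if $\mathbf{X} \preceq \mathbf{Y}$ and $\mathbf{Z}$ is any real matrix, then $\mathbf{Z}^{\top} \mathbf{X} \mathbf{Z} \preceq \mathbf{Z}^{\top} \mathbf{Y} \mathbf{Z}$, which is immediate from the definition via the quadratic form $\mathbf{v}^{\top}(\cdot)\mathbf{v}$. This is routine and not a real obstacle. I do not anticipate any genuine difficulty in the argument; the result is the familiar operator antitonicity of $t \mapsto 1/t$ on $(0,\infty)$, and the main judgment call is merely the choice of the symmetric conjugation by $\mathbf{B}_1^{-1/2}$ in lieu of a heavier simultaneous diagonalization argument.
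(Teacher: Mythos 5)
Your argument is correct: the congruence by $\mathbf{B}_1^{-1/2}$, the eigenvalue bound on $\mathbf{C}=\mathbf{B}_1^{-1/2}\mathbf{B}_2\mathbf{B}_1^{-1/2}$, and the back-conjugation all go through, and the positive definiteness of $\mathbf{B}_2^{-1}$ is immediate as you say. Note that the paper itself offers no proof of this lemma — it is quoted from the cited reference — so there is nothing to diverge from; your square-root/congruence argument is the standard proof of the operator antitonicity of matrix inversion and serves perfectly well in its place.
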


\begin{lemma}\label{pre_ineq}
For any $\beta \in (1,2)$ and $n \in \mathbb{N}^{+}$, it holds that
\begin{equation*}
\check{a} \mathbf{I}_M \preceq \mathbf{G}^{\frac{1}{2}}\mathbf{\hat P}^{-1}\mathbf{G}^{\frac{1}{2}} \preceq \hat{a} \mathbf{I}_M
\end{equation*}
\end{lemma}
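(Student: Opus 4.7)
The plan is to derive the Loewner sandwich directly from the spectral inclusion stated in Assumption \ref{assumption_1}\,{\bf (iii)} by a standard similarity-plus-symmetry argument; the parameters $\beta$ and $n$ in the statement play no role and the proof should go through purely at the level of linear algebra on the $N\times N$ matrices $\mathbf{G}$ and $\mathbf{\hat P}$.

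First I would verify that $\mathbf{G}^{1/2}\mathbf{\hat P}^{-1}\mathbf{G}^{1/2}$ is a bona fide real symmetric matrix. Since $\mathbf{G}$ is SPD its principal square root $\mathbf{G}^{1/2}$ is well defined, real and symmetric, and $\mathbf{\hat P}^{-1}$ inherits symmetry from the SPD matrix $\mathbf{\hat P}$; transposing the product therefore returns the same matrix. This symmetry is what allows a Loewner ordering to make sense in the first place.

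Next I would exhibit the similarity
\begin{equation*}
\mathbf{G}^{-1/2}\bigl(\mathbf{G}^{1/2}\mathbf{\hat P}^{-1}\mathbf{G}^{1/2}\bigr)\mathbf{G}^{1/2}=\mathbf{\hat P}^{-1}\mathbf{G},
\end{equation*}
which shows that the symmetric matrix on the left-hand side of the lemma has exactly the same spectrum as the (non-symmetric) preconditioned matrix $\mathbf{\hat P}^{-1}\mathbf{G}$. Invoking Assumption \ref{assumption_1}\,{\bf (iii)}, I obtain
\begin{equation*}
\sigma\bigl(\mathbf{G}^{1/2}\mathbf{\hat P}^{-1}\mathbf{G}^{1/2}\bigr)=\sigma(\mathbf{\hat P}^{-1}\mathbf{G})\subset[\check{a},\hat{a}].
\end{equation*}

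Finally, because $\mathbf{G}^{1/2}\mathbf{\hat P}^{-1}\mathbf{G}^{1/2}$ is real symmetric it admits an orthogonal diagonalization, and for such a matrix the inclusion of the spectrum in $[\check{a},\hat{a}]$ is equivalent (via the spectral theorem, applied eigenvalue by eigenvalue to $x^{\top}(\cdot)x$) to the Loewner bound
\begin{equation*}
\check{a}\mathbf{I}\preceq\mathbf{G}^{1/2}\mathbf{\hat P}^{-1}\mathbf{G}^{1/2}\preceq\hat{a}\mathbf{I},
\end{equation*}
which is the claim (with $\mathbf{I}$ of size $N$; the subscript $M$ in the statement appears to be a typographical slip). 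There is no substantive obstacle: the lemma is essentially a rewriting of Assumption \ref{assumption_1}\,{\bf (iii)} in quadratic-form language, and the only point that deserves care is the similarity step, which is needed because the conclusion is phrased in terms of the symmetrised matrix rather than the one-sided preconditioned matrix $\mathbf{\hat P}^{-1}\mathbf{G}$ itself.
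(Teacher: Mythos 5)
Your argument is correct and follows essentially the same route as the paper: exploit a similarity transformation to relate the symmetric matrix $\mathbf{G}^{1/2}\mathbf{\hat P}^{-1}\mathbf{G}^{1/2}$ to the preconditioned matrix of Assumption \ref{assumption_1}\,{\bf (iii)}, then convert the spectral inclusion into a Loewner sandwich by symmetry. The only (cosmetic) difference is that you conjugate directly to $\mathbf{\hat P}^{-1}\mathbf{G}$ and invoke the assumption in one step, whereas the paper passes through the symmetric intermediary $\mathbf{\hat P}^{-1/2}\mathbf{G}\mathbf{\hat P}^{-1/2}$ and a Rayleigh-quotient bound built on $\check{a}\mathbf{\hat P}\preceq\mathbf{G}\preceq\hat{a}\mathbf{\hat P}$; your version is a mild streamlining of the same idea, and you are also right that the $\beta$, $n$, and the subscript $M$ in the statement are vestigial and should read as $N$.
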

\begin{proof}
Since $\mathbf{G}^{\frac{1}{2}}\mathbf{\hat P}^{-1}\mathbf{G}^{\frac{1}{2}}$ is similar to $\mathbf{\hat P}^{-\frac{1}{2}}\mathbf{G}\mathbf{\hat P}^{-\frac{1}{2}}$, it suffices to examine the spectrum of $\mathbf{\hat P}^{-\frac{1}{2}}\mathbf{G}\mathbf{\hat P}^{-\frac{1}{2}}$.
By Assumption \ref{assumption_1} {\bf{(iii)}}, it is easy to conclude that
\begin{equation}\label{ineq_Tpeplitz}
\mathbf{O} \prec \check{a}\mathbf{\hat P} \preceq \mathbf{G} \preceq  \hat{a}\mathbf{\hat P},
\end{equation}
which indicates that
\begin{equation*}
\frac{\mathbf{z}^{\top} \mathbf{\hat P}^{-\frac{1}{2}}\mathbf{G}\mathbf{\hat P}^{-\frac{1}{2}} \mathbf{z}}{\mathbf{z}^{\top} \mathbf{z}} \xlongequal[]{\mathbf{y}=\mathbf{\hat P}^{-\frac{1}{2}}\mathbf{z}}\frac{\mathbf{y}^{\top} \mathbf{G} \mathbf{y}}{\mathbf{y}^{\top} \mathbf{\hat P} \mathbf{y}}\subset [\check{a},\hat{a}].
\end{equation*}
We then have
\begin{equation*}
\sigma\left(\mathbf{G}^{\frac{1}{2}}\mathbf{\hat P}^{-1}\mathbf{G}^{\frac{1}{2}}\right)=\sigma\left(\mathbf{\hat P}^{-\frac{1}{2}}\mathbf{G}\mathbf{\hat P}^{-\frac{1}{2}}\right) \subset [\check{a},\hat{a}],
\end{equation*}
which completes the proof.
\end{proof}

\begin{lemma}
For $\theta \in \left[\frac{1}{2},1\right]$, $\mathbf{Q}_{\theta} + \mathbf{Q}_{\theta}^{\top} \succeq \mathbf{O}$.
\end{lemma}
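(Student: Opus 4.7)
The plan is to clear the inverse inside $\mathbf{Q}_\theta = \mathbf{H}_\theta^{-1}\mathbf{T}$ by a congruence transformation. Since $\mathbf{H}_\theta$ is invertible, positive semi-definiteness of the symmetric matrix $\mathbf{Q}_\theta + \mathbf{Q}_\theta^\top$ is equivalent to that of
\[
\mathbf{H}_\theta(\mathbf{Q}_\theta + \mathbf{Q}_\theta^\top)\mathbf{H}_\theta^\top = \mathbf{T}\mathbf{H}_\theta^\top + \mathbf{H}_\theta\mathbf{T}^\top,
\]
so the first step is to reduce the problem to this inverse-free, explicit quantity.

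Next I would expand $\mathbf{T}\mathbf{H}_\theta^\top + \mathbf{H}_\theta\mathbf{T}^\top$ using the bidiagonal structures of $\mathbf{T}$ and $\mathbf{H}_\theta$. Let $\mathbf{L}$ denote the $M\times M$ strictly lower-shift matrix (so that $\mathbf{T} = \mathbf{I}_M - \mathbf{L}$, $\mathbf{H}_\theta = \Delta t[\theta \mathbf{I}_M + (1-\theta)\mathbf{L}]$, and $\mathbf{L}\mathbf{L}^\top = \mathrm{diag}(0,1,\ldots,1)$). A direct multiplication then gives
\[
\mathbf{T}\mathbf{H}_\theta^\top + \mathbf{H}_\theta\mathbf{T}^\top = \Delta t\bigl[2\theta\mathbf{I}_M + (1-2\theta)(\mathbf{L}+\mathbf{L}^\top) - 2(1-\theta)\mathbf{L}\mathbf{L}^\top\bigr].
\]

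The final step is to recognize the bracketed matrix as a non-negative combination of two manifestly PSD matrices that are independent of $\theta$, via the identity
\[
2\theta\mathbf{I}_M + (1-2\theta)(\mathbf{L}+\mathbf{L}^\top) - 2(1-\theta)\mathbf{L}\mathbf{L}^\top = (2\theta-1)\bigl(2\mathbf{I}_M - \mathbf{L} - \mathbf{L}^\top\bigr) + 2(1-\theta)\bigl(\mathbf{I}_M - \mathbf{L}\mathbf{L}^\top\bigr).
\]
Here $2\mathbf{I}_M - \mathbf{L} - \mathbf{L}^\top = \mathrm{tridiag}(-1,2,-1)$ is the classical 1D Laplacian, which is positive definite, and $\mathbf{I}_M - \mathbf{L}\mathbf{L}^\top = \mathrm{diag}(1,0,\ldots,0)$ is positive semi-definite. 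For $\theta \in [1/2,1]$ both scalar coefficients $2\theta-1$ and $2(1-\theta)$ are non-negative, so the combination is PSD, and unwinding the congruence yields $\mathbf{Q}_\theta + \mathbf{Q}_\theta^\top \succeq \mathbf{O}$.

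The main obstacle I expect is spotting the right congruence: one must multiply by $\mathbf{H}_\theta$ on the left and $\mathbf{H}_\theta^\top$ on the right — not $\mathbf{H}_\theta^\top$ and $\mathbf{H}_\theta$ — which is precisely what cancels the inverse hidden in $\mathbf{Q}_\theta = \mathbf{H}_\theta^{-1}\mathbf{T}$. Once this move is made, the remaining identity and the convex-combination decomposition (which interpolates between the Crank--Nicolson case $\theta=1/2$ and the backward Euler case $\theta=1$) are short algebra.
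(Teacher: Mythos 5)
Your proof is correct, and it takes a genuinely different route from the paper. The paper works directly with the explicit entries of $\mathbf{B}_{\theta}=\mathbf{Q}_{\theta}+\mathbf{Q}_{\theta}^{\top}$ and argues in three cases: for $\theta=1$ it uses the known eigenvalues of the tridiagonal matrix $\mathrm{tridiag}(-1,2,-1)$, for $\theta=\tfrac12$ it exhibits an explicit eigendecomposition of the rank-one matrix $4[(-1)^{i+j}]$, and for $\tfrac12<\theta<1$ it identifies $\mathbf{\tilde B}_{\theta}$ with a shifted Kac--Murdock--Szeg\H{o} matrix and invokes its generating function together with the Grenander--Szeg\H{o} theorem. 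You instead clear the hidden inverse by the congruence $\mathbf{H}_{\theta}(\mathbf{Q}_{\theta}+\mathbf{Q}_{\theta}^{\top})\mathbf{H}_{\theta}^{\top}=\mathbf{T}\mathbf{H}_{\theta}^{\top}+\mathbf{H}_{\theta}\mathbf{T}^{\top}$ (valid since $\mathbf{H}_{\theta}$ is invertible for $\theta\ge\tfrac12$, and congruence preserves semi-definiteness), and your algebra checks out: with $\mathbf{T}=\mathbf{I}_M-\mathbf{L}$, $\mathbf{H}_{\theta}=\Delta t[\theta\mathbf{I}_M+(1-\theta)\mathbf{L}]$ one indeed gets $\Delta t[2\theta\mathbf{I}_M+(1-2\theta)(\mathbf{L}+\mathbf{L}^{\top})-2(1-\theta)\mathbf{L}\mathbf{L}^{\top}]=\Delta t[(2\theta-1)(2\mathbf{I}_M-\mathbf{L}-\mathbf{L}^{\top})+2(1-\theta)\,\mathrm{diag}(1,0,\ldots,0)]$, a nonnegative combination of two PSD matrices for $\theta\in[\tfrac12,1]$. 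Your argument is shorter, uniform in $\theta$ (no case split), and avoids the Toeplitz-symbol machinery entirely, while still recovering the paper's boundary cases as the two extreme terms of the decomposition ($\theta=1$ gives the discrete Laplacian, $\theta=\tfrac12$ the rank-one diagonal); what the paper's route buys in exchange is more explicit spectral information, in particular strict positive definiteness of $\mathbf{\tilde B}_{\theta}$ for $\tfrac12<\theta\le 1$ and closed-form eigenvalues in the limiting cases, which your congruence argument also yields (since $2\mathbf{I}_M-\mathbf{L}-\mathbf{L}^{\top}\succ\mathbf{O}$) but does not make explicit.
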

\begin{proof}
Let $\mathbf{B}_{\theta}=\mathbf{Q}_{\theta} + \mathbf{Q}_{\theta}^{\top}$, whose elements are explicitly given by
\begin{equation*}
\mathbf{B}_{\theta} =\frac{1}{\Delta t}\mathbf{\tilde B}_{\theta}:=\frac{1}{\Delta t}
\begin{bmatrix}
\frac{2}{\theta} & -\frac{1}{\theta^2} & -\frac{(\theta-1)}{\theta^3} & \cdots & \cdots & -\frac{(\theta-1)^{M-2}}{\theta^M} \\
-\frac{1}{\theta^2} & \frac{2}{\theta} & \ddots & \ddots & \ddots & \vdots \\
-\frac{(\theta-1)}{\theta^3} & \ddots & \ddots & \ddots & \ddots & \vdots \\
\vdots & \ddots & \ddots & \ddots & \ddots & -\frac{(\theta-1)}{\theta^3} \\
\vdots & \ddots & \ddots & \ddots & \frac{2}{\theta} & -\frac{1}{\theta^2} \\
-\frac{(\theta-1)^{M-2}}{\theta^M} & \cdots & \cdots & -\frac{(\theta-1)}{\theta^3} & -\frac{1}{\theta^2} & \frac{2}{\theta}
\end{bmatrix}.
\end{equation*}
Since $\Delta t>0$, it suffices to prove $\mathbf{\tilde B}_{\theta} \succeq \mathbf{O}$.

When $\theta = 1$, we know that
\begin{eqnarray*}
\mathbf{\tilde B}_1 =
\begin{bmatrix}
2 & -1 &  &  &  \\
-1 & 2 & \ddots &  &  \\
 & \ddots & \ddots & \ddots &  \\
 &  & \ddots & \ddots & -1 \\
 &  &  & -1 & 2 
\end{bmatrix},
\end{eqnarray*}
whose eigenvalues (i.e., $\lambda_k(\mathbf{\tilde B}_1)$) have the following closed-form:
\begin{equation*} 
\lambda_k(\mathbf{\tilde B}_1)=2+2\cos{\left(\frac{k\pi}{M+1}\right)},\quad k=1,2,...,M.
\end{equation*} Clearly, $\mathbf{\tilde B}_{1}$ is symmetric positive definite.

When $\theta=\frac{1}{2}$, we have
\begin{eqnarray*}
\mathbf{\tilde B}_\frac{1}{2} = 4
\begin{bmatrix}
b_{ij}
\end{bmatrix}_{i,j=1}^M, \quad b_{ij}=(-1)^{i+j}.
\end{eqnarray*}
which can be factorized as
\begin{eqnarray*}
    \mathbf{\tilde B}_\frac{1}{2} = 4\hat{D}\mathbb{F}_M \Lambda_{\mathbf{\tilde B}_\frac{1}{2}} \mathbb{F}_M^{*}\hat{D}^{-1}.
\end{eqnarray*}
Note that $\hat{D} = \hbox{diag}((-1)^{k})_{k=1}^M \in \mathbb{R}^{M\times M}$, $\mathbb{F}_{M} =\frac{1}{\sqrt{M}}[\theta_{M}^{(i-1)(j-1)}]_{i,j=1}^{M}\in \mathbb{C}^{M\times M}$ is the discrete Fourier matrix, and $\Lambda_{\mathbf{\tilde B}_\frac{1}{2}} = M \hbox{diag}(e_{1})$, which $e_{1}$ is the first column of the $M\times M$ identity matrix. Thus, all eigenvalues of $\mathbf{\tilde B}_{\frac{1}{2}}$ are greater than or equal to zero by its eigendecomposition as shown above. Therefore, $\mathbf{\tilde B}_{\frac{1}{2}}$ is symmetric positive semi-definite.

When $\frac{1}{2}<\theta<1$, the matrix $\mathbf{\tilde B}_{\theta}$ can be rewritten as
\begin{eqnarray*}
\mathbf{\tilde B}_{\theta} 
&=& - \frac{1}{\theta(\theta-1)}
\begin{bmatrix} 
2-2\theta  & \frac{\theta-1}{\theta} & \ldots & \cdots & \frac{(\theta-1)^{M-1}}{\theta^{M-1}}\\
 \frac{\theta-1}{\theta}  &  2-2\theta  & \ddots  & &  \vdots\\
\vdots  & \ddots & & \ddots & \frac{\theta-1}{\theta}\\
 \frac{(\theta-1)^{M-1}}{\theta^{M-1}}  & \cdots & \cdots & \frac{\theta-1}{\theta} & 2-2\theta \\
\end{bmatrix}
\\&=& \frac{1}{\theta(1-\theta)}
\left(\begin{bmatrix} 
1  & \frac{\theta-1}{\theta} & \ldots & \cdots & \frac{(\theta-1)^{M-1}}{\theta^{M-1}}\\
 \frac{\theta-1}{\theta}  &  1  & \ddots  & &  \vdots\\
\vdots  & \ddots & & \ddots & \frac{\theta-1}{\theta}\\
 \frac{(\theta-1)^{M-1}}{\theta^{M-1}}  & \cdots & \cdots & \frac{\theta-1}{\theta} & 1 \\
\end{bmatrix} 
+ (1-2\theta)\mathbf{I}_M\right)
\\&=& -\frac{(1-\rho)^2}{\rho}
\left(\begin{bmatrix} 
1  & \rho & \ldots & \cdots & \rho^{M-1}\\
\rho  &  1  & \ddots  & &  \vdots\\
\vdots  & \ddots & & \ddots & \rho\\
\rho^{M-1}  & \cdots & \cdots & \rho & 1 \\
\end{bmatrix} 
-\frac{1+\rho}{1-\rho}\mathbf{I}_M\right),~\text{where}~\rho:=\frac{\theta-1}{\theta}
\\&=&
-\frac{(1-\rho)^2}{\rho}\left(\mathbf{K}_M\left(\rho \right)-\frac{1+\rho}{1-\rho}\mathbf{I}_M \right).
\end{eqnarray*}
For $\frac{1}{2}<\theta<1$ , it gives $-1<\rho<0$. By \cite[Theorem 3.5]{fikioris2018spectral}, 
the real symmetric Toeplitz matrix $\mathbf{K}_M$ is the so-called Kac–Murdock–Szeg{\"o} matrix which is generated by the function
\begin{equation*}
    \sigma(\rho,\phi) := \frac{1-\rho^2}{1-2\rho\cos{\phi}+\rho^2}, \quad \phi\in(-\pi,\pi].
\end{equation*}
Thus, the generating function of $\mathbf{K}_M\left(\rho \right)-\frac{1+\rho}{1-\rho}\mathbf{I}_M$ is given by
\begin{eqnarray*}
    g(\rho,\phi):=\sigma(\rho,\phi)-\frac{1+\rho}{1-\rho} 
    &\geq& \frac{1+\rho}{1-\rho}-\frac{1+\rho}{1-\rho}\\
    &=& 0.
\end{eqnarray*}

Since $-\frac{(1-\rho)^2}{\rho} > 0$ and $g(\rho,\phi)$ is not identically to zeros for $-1<\rho<0$, it follows from the Grenander-Szeg{\"o} theorem \cite{chan2007introduction} that $\mathbf{K}_M\left(\rho \right)-\frac{1+\rho}{1-\rho}\mathbf{I}_M$ is SPD and so is $\mathbf{\tilde B}_{\theta}$. 

By combining the three cases above, we can conclude that $\mathbf{\tilde B}_{\theta}\succeq \mathbf{O}$ for $\frac{1}{2}\leq\theta\leq 1$. The proof is complete.
\end{proof}

Finally, we are ready to provide the main result supporting the convergence of GMRES for the two-sided preconditioned system with $\mathbf{P}_{l,\omega}^{-1} \mathbf{A} \mathbf{P}_{r,\omega}^{-1}$.

\begin{theorem}\label{main_theorem}
The condition number of the preconditioned matrix $\mathbf{P}_{l,\omega}^{-1} \mathbf{A} \mathbf{P}_{r,\omega}^{-1}$ is uniformly bounded by a constant, i.e.,
\begin{equation*}
\sup _{M, N} \kappa_2\left(\mathbf{P}_{l,\omega}^{-1} \mathbf{A} \mathbf{P}_{r,\omega}^{-1}\right) \leq \nu(\omega)
\end{equation*}
with
\begin{equation*}
\nu(\omega):=\sqrt{\frac{\hat{a} \max \left\{\frac{\hat{a}}{\omega}, 1, \frac{\omega}{\check{a}}\right\}}{\check{a}\min \left\{\frac{\check{a}}{\omega}, 1, \frac{\omega}{\hat{a}}\right\} }}~~\text{or}~~\sqrt{\frac{\hat{a} \max \left\{\frac{\hat{a}}{\omega}, \frac{\omega}{\check{a}}\right\}}{\check{a}\min \left\{\frac{\check{a}}{\omega}, \frac{\omega}{\hat{a}}\right\} }}.
\end{equation*}
In particular, $\nu(\sqrt{\hat{a}\check{a}})=\min\limits_{\omega \in(0,+\infty)} \nu(\omega)=\frac{\hat{a}}{\check{a}}$, i.e.,
\begin{equation*}
\sup _{M, N} \kappa_2\left(\mathbf{P}_{l, \omega}^{-1} \mathbf{A} \mathbf{P}_{r, \omega}^{-1}\right) \xlongequal{\omega=\sqrt{\hat{a}\check{a}}} \sup _{M, N} \kappa_2\left(\mathbf{P}_{l,\sqrt{\hat{a}\check{a}}}^{-1} \mathbf{A} \mathbf{P}_{r,\sqrt{\hat{a}\check{a}}}^{-1}\right) \leq \nu(\sqrt{\hat{a}\check{a}})=\frac{\hat{a}}{\check{a}}.
\end{equation*}
\end{theorem}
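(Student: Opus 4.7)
The plan is to translate the condition-number bound into a generalized Rayleigh quotient estimate on the pencil $(\mathbf{A}\mathbf{P}_{r,\omega}^{-2}\mathbf{A}^\top,\,\mathbf{P}_{l,\omega}\mathbf{P}_{l,\omega}^\top)$ and then apply Proposition~\ref{main_ineq} to a three-term decomposition. Writing $\mathbf{B}_\omega := \mathbf{P}_{l,\omega}^{-1}\mathbf{A}\mathbf{P}_{r,\omega}^{-1}$, I would first observe that $\kappa_2(\mathbf{B}_\omega)^2 = \lambda_{\max}(\mathbf{B}_\omega\mathbf{B}_\omega^\top)/\lambda_{\min}(\mathbf{B}_\omega\mathbf{B}_\omega^\top)$ and that these eigenvalues coincide with the generalized eigenvalues $\lambda$ of $\mathbf{A}\mathbf{P}_{r,\omega}^{-2}\mathbf{A}^\top v = \lambda\,\mathbf{P}_{l,\omega}\mathbf{P}_{l,\omega}^\top v$. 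Thus the claim reduces to the two-sided bound $\check{a}\min\{\check{a}/\omega,\,1,\,\omega/\hat{a}\} \le \lambda \le \hat{a}\max\{\hat{a}/\omega,\,1,\,\omega/\check{a}\}$, whose quotient is $\nu(\omega)^2$.

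Direct Kronecker expansion yields
\[
\mathbf{P}_{l,\omega}\mathbf{P}_{l,\omega}^\top = \omega\mathbf{\hat P} \otimes \mathbf{I}_M + \mathbf{I}_N \otimes (\mathbf{Q}_\theta + \mathbf{Q}_\theta^\top) + (\omega\mathbf{\hat P})^{-1} \otimes \mathbf{Q}_\theta\mathbf{Q}_\theta^\top,
\]
\[
\mathbf{A}\mathbf{P}_{r,\omega}^{-2}\mathbf{A}^\top = \mathbf{G}(\omega\mathbf{\hat P})^{-1}\mathbf{G} \otimes \mathbf{I}_M + \bigl[\mathbf{G}(\omega\mathbf{\hat P})^{-1} \otimes \mathbf{Q}_\theta^\top + (\omega\mathbf{\hat P})^{-1}\mathbf{G} \otimes \mathbf{Q}_\theta\bigr] + (\omega\mathbf{\hat P})^{-1} \otimes \mathbf{Q}_\theta\mathbf{Q}_\theta^\top,
\]
each grouped naturally into three blocks by the temporal factor. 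The Rayleigh quotient decomposes correspondingly into three paired summands, all three of whose denominator contributions are non-negative --- the middle one by the preceding lemma $\mathbf{Q}_\theta + \mathbf{Q}_\theta^\top \succeq \mathbf{O}$. Lemma~\ref{pre_ineq} supplies the spectral range $\sigma(\mathbf{H}) \subset [\check{a}/\omega,\,\hat{a}/\omega]$ for $\mathbf{H} := (\omega\mathbf{\hat P})^{-1/2}\mathbf{G}(\omega\mathbf{\hat P})^{-1/2}$, and combined with $\check{a}\mathbf{\hat P} \preceq \mathbf{G} \preceq \hat{a}\mathbf{\hat P}$ this should place each of the three pairwise ratios in the intervals $\hat{a}\{\hat{a}/\omega,\,1,\,\omega/\check{a}\}$ and $\check{a}\{\check{a}/\omega,\,1,\,\omega/\hat{a}\}$; Proposition~\ref{main_ineq} then gives the desired bound on $\lambda$.

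The main obstacle will be the middle cross-term $\mathbf{G}(\omega\mathbf{\hat P})^{-1} \otimes \mathbf{Q}_\theta^\top + (\omega\mathbf{\hat P})^{-1}\mathbf{G} \otimes \mathbf{Q}_\theta$ in $\mathbf{A}\mathbf{P}_{r,\omega}^{-2}\mathbf{A}^\top$, which is symmetric overall but is not a single Kronecker product because $\mathbf{G}$ and $\mathbf{\hat P}$ need not commute. Introducing $z_1' := ((\omega\mathbf{\hat P})^{1/2}\otimes \mathbf{I}_M)v$ and $z_2 := ((\omega\mathbf{\hat P})^{-1/2}\otimes \mathbf{Q}_\theta^\top)v$ rewrites the quotient as $\|(\mathbf{H}\otimes \mathbf{I}_M)z_1' + z_2\|^2 / \|z_1' + z_2\|^2$, reducing the middle-term comparison to controlling $z_1'^\top(\mathbf{H}\otimes \mathbf{I}_M)z_2 / z_1'^\top z_2$; an orthogonal eigendecomposition of $\mathbf{H}$ together with the non-negativity of the denominator (guaranteed by $\mathbf{Q}_\theta + \mathbf{Q}_\theta^\top \succeq \mathbf{O}$) should carry this through. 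Finally, the minimization $\min_{\omega > 0}\nu(\omega) = \hat{a}/\check{a}$ at $\omega = \sqrt{\hat{a}\check{a}}$ follows by equating $\hat{a}/\omega = \omega/\check{a}$, at which value the symmetry in $\nu$ collapses $\nu(\omega)^2$ to $(\hat{a}/\check{a})^2$.
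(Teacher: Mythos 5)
Your setup (reducing $\kappa_2$ to the generalized Rayleigh quotient of the pencil $(\mathbf{A}\mathbf{P}_{r,\omega}^{-2}\mathbf{A}^{\top},\,\mathbf{P}_{l,\omega}\mathbf{P}_{l,\omega}^{\top})$, the Kronecker expansions, the use of Lemma \ref{pre_ineq}, $\mathbf{Q}_{\theta}+\mathbf{Q}_{\theta}^{\top}\succeq\mathbf{O}$, and Proposition \ref{main_ineq}) matches the paper's strategy, and your rewriting of the quotient as $\|(\mathbf{H}\otimes\mathbf{I}_M)z_1'+z_2\|_2^2/\|z_1'+z_2\|_2^2$ is correct. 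The gap is in the step you flag as the ``main obstacle'': you propose to control the cross term by bounding $z_1'^{\top}(\mathbf{H}\otimes\mathbf{I}_M)z_2\,/\,z_1'^{\top}z_2$ via an eigendecomposition of $\mathbf{H}$, i.e.\ by $\sigma(\mathbf{H})\subset[\check{a}/\omega,\hat{a}/\omega]$. This does not work: a bilinear form with \emph{different} vectors on the two sides is not confined by the spectrum of the middle matrix. For instance, with $\mathbf{H}=\operatorname{diag}(1,2)$, $z_1'=(1,-1)^{\top}$, $z_2=(1,1-\delta)^{\top}$ one has $z_1'^{\top}z_2=\delta>0$ while $z_1'^{\top}\mathbf{H}z_2=-1+2\delta<0$, so the ratio can be negative or arbitrarily large; moreover Proposition \ref{main_ineq} requires positive summands, and your numerator cross term $\mathbf{G}(\omega\mathbf{\hat P})^{-1}\otimes\mathbf{Q}_{\theta}^{\top}+(\omega\mathbf{\hat P})^{-1}\mathbf{G}\otimes\mathbf{Q}_{\theta}$ has no sign in general because $\mathbf{G}$ and $\mathbf{\hat P}$ need not commute. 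So the term-by-term comparison, as proposed, breaks precisely at the middle pair.

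The repair is the factorization the paper uses (and which your $\mathbf{H}^{1/2}$ is one step away from): write $\mathbf{A}=\bar{\mathbf{A}}(\mathbf{G}^{1/2}\otimes\mathbf{I}_M)$ with $\bar{\mathbf{A}}=\mathbf{G}^{1/2}\otimes\mathbf{I}_M+\mathbf{G}^{-1/2}\otimes\mathbf{Q}_{\theta}$, so that $\mathbf{A}\mathbf{P}_{r,\omega}^{-2}\mathbf{A}^{\top}=\bar{\mathbf{A}}\bigl[(\mathbf{G}^{1/2}(\omega\mathbf{\hat P})^{-1}\mathbf{G}^{1/2})\otimes\mathbf{I}_M\bigr]\bar{\mathbf{A}}^{\top}$. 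Lemma \ref{pre_ineq} plus congruence (Sylvester's inertia law) pulls out the factor in $[\check{a}/\omega,\hat{a}/\omega]$, and one is left with comparing $\bar{\mathbf{A}}\bar{\mathbf{A}}^{\top}=\mathbf{G}\otimes\mathbf{I}_M+\mathbf{I}_N\otimes(\mathbf{Q}_{\theta}+\mathbf{Q}_{\theta}^{\top})+\mathbf{G}^{-1}\otimes\mathbf{Q}_{\theta}\mathbf{Q}_{\theta}^{\top}$ against $\mathbf{P}_{l,\omega}\mathbf{P}_{l,\omega}^{\top}$, whose middle terms are now \emph{identical}, so the troublesome ratio is exactly $1$ and Proposition \ref{main_ineq} applies to the three pairs $[\check{a}/\omega,\hat{a}/\omega]$, $\{1\}$, $[\omega/\hat{a},\omega/\check{a}]$ (dropping the $1$ when the middle term vanishes, which can happen for $\theta=\tfrac12$ — this is the reason for the second form of $\nu(\omega)$). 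Equivalently in your notation: bound $\|(\mathbf{H}\otimes\mathbf{I}_M)z_1'+z_2\|_2^2$ between $\lambda_{\min}(\mathbf{H})$ and $\lambda_{\max}(\mathbf{H})$ times $\|(\mathbf{H}^{1/2}\otimes\mathbf{I}_M)z_1'+(\mathbf{H}^{-1/2}\otimes\mathbf{I}_M)z_2\|_2^2$, and compare the latter with $\|z_1'+z_2\|_2^2$, where both cross terms equal $2z_1'^{\top}z_2\ge 0$. Your concluding computation that $\nu(\sqrt{\hat{a}\check{a}})=\hat{a}/\check{a}$ is fine, though claiming it is the minimizer over all $\omega>0$ deserves a one-line verification.
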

\begin{proof}
Denote $\bar{\mathbf{A}}=\mathbf{G}^{\frac{1}{2}} \otimes \mathbf{I}_N+\mathbf{G}^{-\frac{1}{2}} \otimes \mathbf{Q}_{\theta}$, then
\begin{equation*}
\begin{aligned}
& \mathbf{A}=\bar{\mathbf{A}}\left(\mathbf{G}^{\frac{1}{2}} \otimes \mathbf{I}_N\right), \\
& \left(\mathbf{P}_{l, \omega}^{-1} \mathbf{A} \mathbf{P}_{r, \omega}^{-1}\right)\left(\mathbf{P}_{l, \omega}^{-1} \mathbf{A} \mathbf{P}_{r, \omega}^{-1}\right)^{\top}=\mathbf{P}_{l, \omega}^{-1} \bar{\mathbf{A}}\left[\left(\mathbf{G}^{\frac{1}{2}}\left(\omega\mathbf{\hat P}\right)^{-1} \mathbf{G}^{\frac{1}{2}}\right) \otimes \mathbf{I}_N\right] \bar{\mathbf{A}}^{\top} \mathbf{P}_{l, \omega}^{-\top}.
\end{aligned}
\end{equation*}

Combining Lemma \ref{pre_ineq} and the Sylvester inertia law, we have
\begin{equation*}
\frac{\check{a}}{\omega} \mathbf{P}_{l, \omega}^{-1} \bar{\mathbf{A}} \bar{\mathbf{A}}^{\top} \mathbf{P}_{l, \omega}^{-\top} \preceq\left(\mathbf{P}_{l, \omega}^{-1} \mathbf{A} \mathbf{P}_{r, \omega}^{-1}\right)\left(\mathbf{P}_{l, \omega}^{-1} \mathbf{A} \mathbf{P}_{r, \omega}^{-1}\right)^{\top} \preceq \frac{\hat{a}}{\omega} \mathbf{P}_{l, \omega}^{-1} \bar{\mathbf{A}} \bar{\mathbf{A}}^{\top} \mathbf{P}_{l, \omega}^{-\top}.
\end{equation*}
In the following, we will consider the eigenvalues of the symmetric matrix $\mathbf{P}_{l, \omega}^{-1} \bar{\mathbf{A}} \bar{\mathbf{A}}^{\top} \mathbf{P}_{l, \omega}^{-\top}$.
Since
\begin{equation}\label{quadratic_quotient}
\begin{aligned}
\frac{\mathbf{z}^{\top} \mathbf{P}_{l, \omega}^{-1} \bar{\mathbf{A}} \bar{\mathbf{A}}^{\top} \mathbf{P}_{l, \omega}^{-\top} \mathbf{z}}{\mathbf{z}^{\top} \mathbf{z}}&\underbrace{=}_{\mathbf{y}=\mathbf{P}_{l, \omega}^{-\top} \mathbf{z}} \frac{\mathbf{y}^{\top} \bar{\mathbf{A}} \bar{\mathbf{A}}^{\top} \mathbf{y}}{\mathbf{y}^{\top} \mathbf{P}_{l, \omega} \mathbf{P}_{l, \omega}^{\top} \mathbf{y}}\\
&=\frac{\mathbf{y}^{\top}\left[\mathbf{G} \otimes \mathbf{I}_N+\mathbf{I}_M \otimes\left(\mathbf{Q}_{\theta}+\mathbf{Q}_{\theta}^{\top}\right)+\mathbf{G}^{-1} \otimes\left(\mathbf{Q}_{\theta} \mathbf{Q}_{\theta}^{\top}\right)\right] \mathbf{y}}{\mathbf{y}^{\top}\left[\left(\omega\mathbf{\hat P}\right) \otimes \mathbf{I}_N+\mathbf{I}_M \otimes\left(\mathbf{Q}_{\theta}+\mathbf{Q}_{\theta}^{\top}\right)+\left(\omega\mathbf{\hat P}\right)^{-1} \otimes\left(\mathbf{Q}_{\theta} \mathbf{Q}_{\theta}^{\top}\right)\right] \mathbf{y}}.
\end{aligned}
\end{equation}
It suffices to consider 
\begin{equation*}
\frac{\mathbf{y}^{\top}\left(\mathbf{G} \otimes \mathbf{I}_N\right) \mathbf{y}}{\mathbf{y}^{\top}\left(\left(\omega\mathbf{\hat P}\right) \otimes \mathbf{I}_N\right) \mathbf{y}}
\end{equation*}
and
\begin{equation*}
\frac{\mathbf{y}^{\top}\left(\mathbf{G}^{-1} \otimes\left(\mathbf{Q}_{\theta} \mathbf{Q}_{\theta}^{\top}\right)\right) \mathbf{y}}{\mathbf{y}^{\top}\left(\left(\omega\mathbf{\hat P}\right)^{-1} \otimes\left(\mathbf{Q}_{\theta} \mathbf{Q}_{\theta}^{\top}\right)\right) \mathbf{y}}.
\end{equation*}
Note that
\begin{equation*}
\mathbf{O} \prec \check{a}\mathbf{\hat P} \preceq \mathbf{G} \preceq \hat{a}\mathbf{\hat P}.
\end{equation*}
Then,
\begin{equation*}
\frac{\check{a}}{\omega} \leq \frac{\mathbf{y}^{\top}\left(\mathbf{G} \otimes \mathbf{I}_N\right) \mathbf{y}}{\mathbf{y}^{\top}\left[\left(\omega\mathbf{\hat P}\right) \otimes \mathbf{I}_N\right] \mathbf{y}} \leq \frac{\hat{a}}{\omega}.
\end{equation*}
On the other hand, combining Lemma \ref{ineq_inv_Toep} and (\ref{ineq_Tpeplitz}) gives
\begin{equation*}
\mathbf{O} \prec (\hat{a}\mathbf{\hat P})^{-1} \preceq \mathbf{G}^{-1} \preceq  (\check{a}\mathbf{\hat P})^{-1},
\end{equation*}
we then have
\begin{equation*}
\begin{aligned}
\frac{\omega}{\hat{a}}=\times \frac{\mathbf{y}^{\top}\left[\left(\hat{a}\mathbf{\hat P}\right)^{-1} \otimes\left(\mathbf{Q}_{\theta} \mathbf{Q}_{\theta}^{\top}\right)\right] \mathbf{y}}{\mathbf{y}^{\top}\left[\left(\omega\mathbf{\hat P}\right)^{-1} \otimes\left(\mathbf{Q}_{\theta} \mathbf{Q}_{\theta}^{\top}\right)\right] \mathbf{y}} & \leq \frac{\mathbf{y}^{\top}\left[\mathbf{G}^{-1} \otimes\left(\mathbf{Q}_{\theta}\mathbf{Q}_{\theta}^{\top}\right)\right] \mathbf{y}}{\mathbf{y}^{\top}\left[\left(\omega\mathbf{\hat P}\right)^{-1} \otimes\left(\mathbf{Q}_{\theta} \mathbf{Q}_{\theta}^{\top}\right)\right] \mathbf{y}} \\
& \leq  \frac{\mathbf{y}^{\top}\left[\left(\check{a}\mathbf{\hat P}\right)^{-1} \otimes\left(\mathbf{Q}_{\theta}\mathbf{Q}_{\theta}^{\top}\right)\right] \mathbf{y}}{\mathbf{y}^{\top}\left[\left(\omega\mathbf{\hat P}\right)^{-1} \otimes\left(\mathbf{Q}_{\theta} \mathbf{Q}_{\theta}^{\top}\right)\right] \mathbf{y}}= \frac{\omega}{\check{a}}.
\end{aligned}
\end{equation*}
Since all the matrices in the right hand side of (\ref{quadratic_quotient}) are SPD except for the symmetric semi-positive definite matrix $\mathbf{Q}_{\theta}+\mathbf{Q}_{\theta}^{\top}$, we know that by Proposition \ref{main_ineq}, when $\mathbf{y}^{\top} \left(\mathbf{Q}_{\theta}+\mathbf{Q}_{\theta}^{\top}\right) \mathbf{y} \neq 0$,
\begin{equation*}
\min \left\{\frac{\check{a}}{\omega}, 1, \frac{\omega}{\hat{a}}\right\} \leq \frac{\mathbf{z}^{\top} \mathbf{P}_{l, \omega}^{-1} \bar{\mathbf{A}} \bar{\mathbf{A}}^{\top} \mathbf{P}_{l, \omega}^{-\top} \mathbf{z}}{\mathbf{z}^{\top} \mathbf{z}} \leq \max \left\{\frac{\hat{a}}{\omega}, 1, \frac{\omega}{\check{a}}\right\}.
\end{equation*}
In particular, when $\theta=\frac{1}{2}$, $\mathbf{y}^{\top} \left(\mathbf{Q}_{\theta}+\mathbf{Q}_{\theta}^{\top}\right) \mathbf{y}$ may equal to zero. In this case,
\begin{equation*}
\min \left\{\frac{\check{a}}{\omega}, \frac{\omega}{\hat{a}}\right\} \leq \frac{\mathbf{z}^{\top} \mathbf{P}_{l, \omega}^{-1} \bar{\mathbf{A}} \bar{\mathbf{A}}^{\top} \mathbf{P}_{l, \omega}^{-\top} \mathbf{z}}{\mathbf{z}^{\top} \mathbf{z}} \leq \max \left\{\frac{\hat{a}}{\omega}, \frac{\omega}{\check{a}}\right\}.
\end{equation*}
Therefore,
\begin{equation*}
\frac{\check{a}}{\omega} \min \left\{\frac{\check{a}}{\omega}, 1, \frac{\omega}{\hat{a}}\right\} \mathbf{I}_{N J} \preceq\left(\mathbf{P}_{l, \omega}^{-1} \mathbf{A} \mathbf{P}_{r, \omega}^{-1}\right)\left(\mathbf{P}_{l, \omega}^{-1} \mathbf{A} \mathbf{P}_{r, \omega}^{-1}\right)^{\top} \preceq \frac{\hat{a}}{\omega} \max \left\{\frac{\hat{a}}{\omega}, 1, \frac{\omega}{\check{a}}\right\}
\end{equation*}
or
\begin{equation*}
\frac{\check{a}}{\omega} \min \left\{\frac{\check{a}}{\omega}, \frac{\omega}{\hat{a}}\right\} \mathbf{I}_{N J} \preceq\left(\mathbf{P}_{l, \omega}^{-1} \mathbf{A} \mathbf{P}_{r, \omega}^{-1}\right)\left(\mathbf{P}_{l, \omega}^{-1} \mathbf{A} \mathbf{P}_{r, \omega}^{-1}\right)^{\top} \preceq \frac{\hat{a}}{\omega} \max \left\{\frac{\hat{a}}{\omega}, \frac{\omega}{\check{a}}\right\},
\end{equation*}
and
\begin{equation*}
\kappa_2\left(\mathbf{P}_{l, \omega}^{-1} \mathbf{A} \mathbf{P}_{r, \omega}^{-1}\right)=\left\|(\mathbf{P}_{l, \omega}^{-1} \mathbf{A} \mathbf{P}_{r, \omega}^{-1})^{-1}\right\|_2 \left\|\mathbf{P}_{l, \omega}^{-1} \mathbf{A} \mathbf{P}_{r, \omega}^{-1}\right\|_2 \leq \nu(\omega).
\end{equation*}
It is easy to check that
\begin{equation*}
\nu(\sqrt{\hat{a}\check{a}})=\min _{\omega \in(0,+\infty)} \nu(\omega)=\frac{\hat{a}}{\check{a}} .
\end{equation*}
\end{proof}

\section{Numerical experiments}\label{sec:num}
In this section, we numerically examine the efficiency of the proposed preconditioner. 

In order to show the efficiency of the proposed preconditioner, numerical results with and without preconditioner are presented for comparison. In the following tables, `GMRES' represents the GMRES method without preconditioner; `PGMRES' denotes the GMRES method with the preconditioner defined in \eqref{pre_coefficient}. Also, `Error' denotes the error between the true solution and the numerical solution of the all at once system, measured in the infinity norm; `CPU(s)' displays the CPU time in seconds for solving the discrete linear system and `Iter' stands for the number of iterations of GMRES.

In all numerical tests, $\theta$ is chosen as 1/2 (i.e., the Crank-Nicolson method is used). We adopt the Matlab built-in function $\mathbf{gmres}$ with $\mathit{restart}=50$ and $\mathit{maxit}=1000$. The initial guess of GMRES methods (including PGMRES method) at each time step is chosen as the zero vector, and the stopping criterion is set as
$$
\frac{\|\mathbf{r}_k\|_2}{\|\mathbf{r}_0\|_2}\leq 10^{-8}.
$$

\begin{example}\label{example1}
\rm{
Consider the problem (\ref{RSFDEs}) with  $\mathcal{L}=\nabla \cdot(a(\boldsymbol{x})\nabla)$, $\Omega=(0,1)^2$, $T=1$ in the following two cases:
\begin{itemize}
\item Case \uppercase\expandafter{\romannumeral1}: the variable coefficient $a(\boldsymbol{x})=a(x_1, x_2)=40+x_1^{3.5}+x_2^{3.5}$, the exact solution is $u(x_1, x_2, t)=\sin(\pi x_1)\sin(\pi x_2)t^2$, and the source term is 
\begin{equation*}
\begin{aligned}
f(x_1,x_2,t)=&\sin(\pi x_1)\sin(\pi x_2)\left[2t+2\pi^2a(x_1,x_2)t^2 \right] \\
&-\pi t^2 \left[3.5x_1^{2.5}\cos(\pi x_1)\sin(\pi x_2)+ 3.5x_2^{2.5}\sin(\pi x_1)\cos(\pi x_2)\right].
\end{aligned}
\end{equation*}
\item Case \uppercase\expandafter{\romannumeral2}: the variable coefficient $a(\boldsymbol{x})=a(x_1, x_2)=(20+x_1^2)(20+x_2^2)$, the exact solution is $u(x_1, x_2, t)=e^t x_1(1-x_1)x_2(1-x_2)$, and the source term is 
\begin{equation*}
\begin{aligned}
f(x_1,x_2,t)=&e^t x_1(1-x_1)x_2(1-x_2)+2a(x_1, x_2)e^t\left(x_1(1-x_1)+x_2(1-x_2) \right) \\
&-2x_1(1-2x_1)x_2(1-x_2)(20+x_2^2)e^t-2x_2(1-2x_2)x_1(1-x_1)(20+x_1^2)e^t.
\end{aligned}
\end{equation*}
\end{itemize}
}
\end{example}

\begin{table}[!tbp]
\centering
\tabcolsep=3pt
\caption{Results of GMRES solvers for Example $1$ with $n_1+1=n_2+1=2^8$.}
\label{table1}
\begin{tabular}{cccccccc}
  \toprule
   \multirow{2}{*}{} & \multirow{2}{*}{$M$}  &   \multicolumn{3}{c}{GMRES} &  \multicolumn{3}{c}{PGMRES} \\  \cmidrule(r){3-5} \cmidrule(r){6-8} 
     &    & Error &CPU(s)  &Iter & Error &CPU(s)  &Iter  \\
\hline
  \multirow{5}{*}{Case \uppercase\expandafter{\romannumeral1}}
         &$2^{4}$    &1.8777e-3   &478.53  &3771    &1.8777e-3  &2.10 &8      \\
         &$2^{5}$    &4.5198e-4   &956.65  &3767    &4.5198e-4  &4.58 &8      \\
         &$2^{6}$    &1.0517e-4   &2144.62 &3763    &1.0517e-4  &10.68 &8      \\
         &$2^{7}$    &2.3096e-5   &4817.08 &3749    &2.3096e-5  &22.23 &8      \\
\hline
 \multirow{5}{*}{Case \uppercase\expandafter{\romannumeral2}}
         &$2^{4}$    &1.0487e-4  &552.37     &5028            &1.0484e-4  &12.29 &10      \\
         &$2^{5}$    &2.4682e-5  &1287.55    &5028            &2.4652e-5  &25.94 &10      \\
         &$2^{6}$    &5.3380e-6  &3008.99    &5028            &5.3118e-6  &52.12 &10      \\
         &$2^{7}$    &1.3342e-6  &6480.52    &4849            &1.3085e-6  &100.38&10      \\       
\bottomrule
\end{tabular}
\end{table}

\begin{table}[!tbp]
\centering
\tabcolsep=3pt
\caption{Results of GMRES solvers for Example $1$ with $M=2^{11}$.}
\label{table2}
\begin{tabular}{cccccccc}
  \toprule
   \multirow{2}{*}{} & \multirow{2}{*}{$n_i+1$}    &  \multicolumn{3}{c}{GMRES} &  \multicolumn{3}{c}{PGMRES} \\  \cmidrule(r){3-5} \cmidrule(r){6-8} 
     &    &  Error &CPU(s)  &Iter   & Error &CPU(s)  &Iter  \\
\hline
  \multirow{5}{*}{Case \uppercase\expandafter{\romannumeral1}}
         &$2^{3}$    &1.2792e-2           &67.95  &4472 &1.2792e-2   &20.79 &8      \\
         &$2^{4}$    &3.1800e-3           &237.20  &4693 &3.1800e-3   &21.42 &8      \\
         &$2^{5}$    &7.9384e-4           &1309.87  &5537 &7.9385e-4   &25.01 &8      \\
         &$2^{6}$    &1.9834e-4           &9851.06  &7816 &1.9835e-4   &48.34 &8      \\
\hline
 \multirow{5}{*}{Case \uppercase\expandafter{\romannumeral2}}
         &$2^{3}$    &3.2604e-5   &13.40     &875     &3.2603e-5   &25.10 &10      \\
         &$2^{4}$    &8.1551e-6   &65.85     &1215    &8.1536e-6   &25.77 &10      \\
         &$2^{5}$    &2.0439e-6   &446.37    &1916    &2.0422e-6   &31.13 &10      \\
         &$2^{6}$    &5.1607e-7   &4263.82   &3394    &5.1441e-7   &58.56 &10      \\      
\bottomrule
\end{tabular}
\end{table}

For Example \ref{example1}, we use the constant-Laplacian matrix to approximate the variable coefficient Laplacian matrix. Following the notation in \cite{lin2021parallel}, we denote by $\check{a}$ and $\hat{a}$ the minimum and maximum values, respectively, of the function $a(\boldsymbol{x})$. In this example, we choose $\omega=\sqrt{\hat{a}\check{a}}$ to ensure the upper bound of condition number of $\mathbf{P}_{l, \omega}^{-1} \mathbf{A} \mathbf{P}_{r, \omega}^{-1}$ achieve its minimum.

Numerical results for solving \eqref{linear_sys_2} from the two cases in Example \ref{example1}, with and without a preconditioner, are presented in Tables \ref{table1} and \ref{table2}. In this example, we utilize a second-order central difference discretization \cite{CD2} for the Riesz derivative and the preconditioner defined in \eqref{pre_coefficient} when solving \eqref{coef_matrix}. Tables \ref{table1} and \ref{table2} show that the number of iterations without the preconditioner is significantly large for large $M$ and $N$, making the GMRES method time-consuming while the proposed preconditioner substantially reduces the number of iterations required by GMRES, thereby decreasing the CPU time. Additionally, for each case, PGMRES is more accurate than GMRES and the number of iterations remains nearly constant regardless of matrix size, demonstrating that the proposed preconditioning technique is both robust and stable.
\begin{example}\label{example2}
\rm{
Consider the problem (\ref{RSFDEs}) with $\mathcal{L}=\sum_{i=1}^d K_i \frac{\partial^{\alpha_i}}{\partial\left|x_i\right|^{\alpha_i}}$, $d=2$, $T=1$, $\Omega=(0,2)^2$, $K_1=K_2=1$, $\psi(x_1,x_2)=x_1^{4}(2-x_1)^{4}x_2^{4}(2-x_2)^{4}$ and the source term
\begin{equation*}
\begin{aligned}
f(x_1,x_2,t)=&\frac{1}{2 \cos (\pi \alpha_1 / 2)} K_1 e^{-\frac{t}{3}} x_2^{4}(2-x_2)^{4} \sum_{k=0}^{4}(-1)^{4-k}2^k C_{4}^{k} \frac{\Gamma(9-k)}{\Gamma(9-k-\alpha)} g(k, \alpha_1, x_1)\\
&+\frac{1}{2 \cos (\pi \alpha_2 / 2)} K_2 e^{-\frac{t}{3}} x_1^{4}(2-x_1)^{4} \sum_{k=0}^{4}(-1)^{4-k}2^k C_{4}^{k} \frac{\Gamma(9-k)}{\Gamma(9-k-\beta)} g(k, \alpha_2, x_2) \\
&-\frac{1}{3}e^{-\frac{t}{3}} x_1^{4}(2-x_1)^{4} x_2^{4}(1-x_2)^{4},
\end{aligned}
\end{equation*}
with
\begin{equation*}
g(k, \gamma, z)=z^{8-k-\gamma}+(2-z)^{8-k-\gamma}~(k=0,1, 2,3,4,~z=x_1, x_2,~\gamma=\alpha_1, \alpha_2) .
\end{equation*}
The exact solution of this problem is
\begin{equation*}
u(x, y, t)=e^{-\frac{t}{3}} x_1^{4}(2-x_1)^{4}x_2^{4}(2-x_2)^{4}.
\end{equation*}
}
\end{example}

\begin{table}[!tbp]
\centering
\tabcolsep=3pt
\caption{Results of GMRES solvers for Example $2$ with $n_1+1=n_2+1=2^8$.}
\label{table3}
\begin{tabular}{cccccccc}
  \toprule
   \multirow{2}{*}{$(\alpha_1,\alpha_2)$} & \multirow{2}{*}{$M$}   &  \multicolumn{3}{c}{GMRES} &  \multicolumn{3}{c}{PGMRES} \\  \cmidrule(r){3-5} \cmidrule(r){6-8} 
     &    & Error &CPU(s)  &Iter &Error &CPU(s)  &Iter  \\
\hline
  \multirow{5}{*}{(1.1,1.2)}
         &$2^{3}$    &1.6496e-4   &25.15    &298       &1.6496e-4       &2.71  &21      \\   
         &$2^{4}$    &4.1112e-5   &83.81    &484       &4.1120e-5       &5.35  &21      \\  
         &$2^{5}$    &1.0271e-5   &278.10   &740       &1.0271e-5       &12.00 &21      \\  
         &$2^{6}$    &2.5661e-6   &1098.51  &1296      &2.5649e-6       &27.35 &21      \\   
\hline
 \multirow{5}{*}{(1.4,1.5)}
         &$2^{3}$    &1.6949e-4   &48.02    &593     &1.6949e-4           &2.45   &19      \\   
         &$2^{4}$    &4.2311e-5   &146.03   &900     &4.2311e-5           &4.83   &19      \\   
         &$2^{5}$    &1.0567e-5   &504.01   &1348    &1.0568e-5           &10.76  &19      \\ 
         &$2^{6}$    &2.6381e-6   &1619.21  &1934    &2.6384e-6           &24.46  &19      \\          
\hline
 \multirow{5}{*}{(1.8,1.9)}
         &$2^{3}$    &1.7674e-4   &138.24   &1642     &1.7674e-4            &1.85  &14      \\   
         &$2^{4}$    &4.4109e-5   &354.66   &2183     &4.4109e-5            &3.63  &14      \\   
         &$2^{5}$    &1.1018e-5   &1008.03  &2782     &101018e-5            &8.33  &14      \\ 
         &$2^{6}$    &2.7510e-6   &3330.02  &4080     &2.7518e-6            &18.47 &14      \\     
\hline
 \multirow{5}{*}{(1.1,1.9)}
         &$2^{3}$    &1.7200e-4   &113.04   &1292     &1.7200e-4            &2.07  &16      \\   
         &$2^{4}$    &4.2921e-5   &286.13   &1749     &4.2921e-5            &4.07  &16      \\   
         &$2^{5}$    &1.0720e-5   &903.44   &2562     &1.0721e-5            &9.18  &16      \\   
         &$2^{6}$    &2.6774e-6   &2921.51  &3508     &2.6784e-6            &20.81 &16      \\ 
\bottomrule
\end{tabular}
\end{table}

\begin{table}[!tbp]
\centering
\tabcolsep=3pt
\caption{Results of GMRES solvers for Example $2$ with $M=2^8$.}
\label{table4}
\begin{tabular}{cccccccc}
  \toprule
   \multirow{2}{*}{$(\alpha_1,\alpha_2)$} & \multirow{2}{*}{$n_i+1$}   &  \multicolumn{2}{c}{GMRES} &  \multicolumn{2}{c}{PGMRES} \\  \cmidrule(r){3-5} \cmidrule(r){6-8} 
     &    & Error &CPU(s)  &Iter &Error &CPU(s)  &Iter  \\
\hline
  \multirow{5}{*}{(1.1,1.2)}
         &$2^{3}$    &3.2563e-3   &88.70    &5872 &3.2563e-3   &5.39 &18      \\ 
         &$2^{4}$    &2.1766e-4   &185.24   &5834 &2.1766e-4   &6.56 &18      \\   
         &$2^{5}$    &1.3878e-5   &546.03   &5656 &1.3878e-5   &7.65 &19      \\   
         &$2^{6}$    &7.4802e-7   &1445.79  &5760 &7.4878e-7   &13.88&19      \\   
\hline
 \multirow{5}{*}{(1.4,1.5)}
         &$2^{3}$    &4.5421e-3   &83.35    &5498     &4.5421e-3    &5.67 &18      \\ 
         &$2^{4}$    &3.0463e-4   &174.47   &5617     &3.0463e-4    &6.52 &18      \\   
         &$2^{5}$    &1.9429e-5   &531.38   &5530     &1.9428e-5    &8.01 &17      \\   
         &$2^{6}$    &1.1407e-6   &1432.07  &5680     &1.0959e-6    &12.96&17      \\         
\hline
 \multirow{5}{*}{(1.8,1.9)}
         &$2^{3}$    &6.5320e-3   &77.74    &5175     &6.5320e-3    &5.68 &18      \\ 
         &$2^{4}$    &4.8372e-4   &166.83   &5414     &4.8372e-4    &5.82 &18      \\   
         &$2^{5}$    &3.3043e-5   &538.91   &5544     &3.3043e-5    &7.41 &16      \\   
         &$2^{6}$    &2.1066e-6   &1614.67  &6390     &2.1066e-6    &12.31 &16      \\   
\hline
 \multirow{5}{*}{(1.1,1.9)}
         &$2^{3}$    &7.3781e-3   &81.41    &5930     &7.3781e-3    &5.60 &18      \\ 
         &$2^{4}$    &5.4293e-4   &178.70   &5499     &5.4293e-4    &5.74 &18      \\   
         &$2^{5}$    &3.7326e-5   &532.12   &5584     &3.7325e-5    &7.33 &16      \\   
         &$2^{6}$    &2.3640e-6   &1566.63  &6146     &2.3640e-6    &12.33 &16      \\   
\bottomrule
\end{tabular}
\end{table}

For Example \ref{example2}, we use the fourth-order discretization scheme proposed in \cite{ding2023high} for the Riesz derivatives and the $\tau$-matrix based preconditioner proposed in \cite{qu2024novel}. Since the spectrum of the preconditioned matrix has been proved in the open interval $(3/8, 2)$ \cite{qu2024novel}, we choose $\omega=\sqrt{\hat{a}\check{a}}=\frac{\sqrt{3}}{2}$ in the numerical experiments. Errors, numbers of iterations and CPU times of GMRES method with and without preconditioner for different $\alpha_i$, $M$ and $n_i$ are displayed in Tables \ref{table3} and \ref{table4} for comparison. Similar to Example \ref{example1}, the numeral results also demonstrate satisfying performance of the PGMRES method. Specifically, it significantly reduces the number of iterations and CPU usage compared to the GMRES method. Additionally, it maintains nearly constant iteration counts for a fixed $\alpha_i$, even as matrix sizes vary.

\section{Conclusions}\label{sec:con}
In this paper, we proposed a PinT preconditioner for the all-at-once linear system arising from a class of evolutionary PDEs when the $\theta$-method is used. The proposed preconditioner was shown to be efficiently implemented. Theoretically, we explored the relationship between the residuals of the GMRES solver for one-sided and two-sided preconditioned linear systems. Additionally, it was proven that the upper bound of the condition number for the GMRES solver in the two-sided preconditioned system remained constant, independent of matrix size, thereby ensuring the convergence of the GMRES solver for the one-sided preconditioned system. The efficacy of the proposed preconditioner was well-demonstrated by the small and stable iteration counts observed in the numerical experiments.

\section*{Acknowledgments}
The work of Sean Y. Hon was supported in part by the Hong Kong RGC under grant 22300921 and a start-up grant from the Croucher Foundation. The work of Xue-lei Lin was partially supported by research grants: 12301480 from NSFC, HA45001143 from Harbin Institute of Technology, Shenzhen, HA11409084  from Shenzhen.

\bibliographystyle{plain}


\section*{Appendix}
In the appendix, we verify that when $\mathcal{L}=\sum_{i=1}^d K_i \frac{\partial^{\alpha_i}}{\partial\left|x_i\right|^{\alpha_i}}$, if $\mathbf{G}$ in (\ref{coef_matrix}) is the discretization matrix associated with the fourth-order approximation proposed in \cite{ding2023high} and $\mathbf{\hat P}$ is the corresponding $\tau$-matrix based preconditioner proposed in \cite{qu2024novel}, then $\check{c}$ in Assumption \ref{assumption_1}  {\bf{(i)}} is a constant independent of both $M$ and $N$.

\begin{proposition}
The matrices $\mathbf{\Lambda}_{\mathbf{\hat P}} \succ \mathbf{O},~\mathbf{\hat P} \succ \mathbf{O}$ and $\inf \limits _{N>0} \lambda_{\min }(\mathbf{\hat P}) \geq \check{c}>0$, where
\begin{equation}\label{lower_bound}
\check{c}:=\sum_{i=1}^d \frac{K_i}{\left(\hat{a}_i-\check{a}_i\right)^{\alpha_i}} \inf _{n_i \geq 1}(n_i+1)^{\alpha_i}\left(w_0^{\left(\alpha_i\right)}+2 \sum_{k=1}^{n_i-1} w_k^{\left(\alpha_i\right)}\right).
\end{equation}
\end{proposition}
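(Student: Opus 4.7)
The strategy is to exploit the tensor-product structure of $\mathbf{\hat P}$ and derive a closed-form eigendecomposition via the multi-dimensional sine transform, then bound the resulting eigenvalues uniformly below.

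First, I would use the fact that in the Riesz setting the discretization matrix factors as a tensor sum
\begin{equation*}
\mathbf{G}=\sum_{i=1}^d \frac{K_i}{h_i^{\alpha_i}}\,\mathbf{I}_{n_1}\otimes\cdots\otimes\mathbf{G}_{\alpha_i,n_i}\otimes\cdots\otimes\mathbf{I}_{n_d},
\end{equation*}
where $\mathbf{G}_{\alpha_i,n_i}$ is the 1D symmetric Toeplitz matrix with first column $(w_0^{(\alpha_i)},\ldots,w_{n_i-1}^{(\alpha_i)})^{\top}$ produced by the fourth-order stencil of \cite{ding2023high}. The $\tau$-preconditioner of \cite{qu2024novel} inherits this tensor-sum form with each factor replaced by its Bini--Di Benedetto $\tau$-approximation. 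The tensor-product sine transform $\mathbf{S}=\mathbf{S}_{n_1}\otimes\cdots\otimes\mathbf{S}_{n_d}$ is orthogonal and diagonalizes every summand simultaneously, so $\mathbf{S}^{\top}\mathbf{\hat P}\mathbf{S}=\mathbf{\Lambda}_{\mathbf{\hat P}}$ has diagonal entries
\begin{equation*}
\lambda(k_1,\ldots,k_d)=\sum_{i=1}^d \frac{K_i}{h_i^{\alpha_i}}\,\mu^{(\alpha_i,n_i)}_{k_i},\qquad \mu^{(\alpha_i,n_i)}_{k_i}=w_0^{(\alpha_i)}+2\sum_{j=1}^{n_i-1}w_j^{(\alpha_i)}\cos\!\left(\frac{jk_i\pi}{n_i+1}\right).
\end{equation*}

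Second, I would invoke the sign property $w_j^{(\alpha_i)}\le 0$ for $j\ge 1$, which is standard for the fourth-order Riesz stencil (essentially because the underlying generating symbol is non-negative with a zero of order $\alpha_i$ at the origin). Since $\cos(jk_i\pi/(n_i+1))\le 1$, multiplying a non-positive $w_j^{(\alpha_i)}$ by $\cos(\cdot)$ can only increase it, giving the uniform bound $\mu^{(\alpha_i,n_i)}_{k_i}\ge w_0^{(\alpha_i)}+2\sum_{j=1}^{n_i-1}w_j^{(\alpha_i)}$. Substituting $h_i=(\hat{a}_i-\check{a}_i)/(n_i+1)$ and summing over $i$ reproduces exactly the expression for $\check c$ in (\ref{lower_bound}); positive definiteness of $\mathbf{\Lambda}_{\mathbf{\hat P}}$ and thus of $\mathbf{\hat P}$ then follows automatically from strict positivity of this lower bound.

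The main obstacle is proving strict positivity of the one-dimensional infimum $\inf_{n_i\ge 1}(n_i+1)^{\alpha_i}\bigl(w_0^{(\alpha_i)}+2\sum_{k=1}^{n_i-1}w_k^{(\alpha_i)}\bigr)$ rather than merely non-negativity. Note that because the fourth-order Riesz weights annihilate constants in the limit, $w_0^{(\alpha_i)}+2\sum_{k=1}^{\infty}w_k^{(\alpha_i)}=0$, so for each finite $n_i$ the truncation identity
\begin{equation*}
w_0^{(\alpha_i)}+2\sum_{k=1}^{n_i-1}w_k^{(\alpha_i)}=-2\sum_{k=n_i}^{\infty}w_k^{(\alpha_i)}
\end{equation*}
expresses the quantity of interest as the tail of a series of non-positive terms, hence strictly positive at every $n_i$. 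The plan is to combine this with the asymptotic $w_k^{(\alpha_i)}\sim -C_{\alpha_i}k^{-(1+\alpha_i)}$ available from \cite{ding2023high} and the elementary estimate $(n+1)^{\alpha_i}\sum_{k=n}^{\infty}k^{-(1+\alpha_i)}\to 1/\alpha_i$ to show that the rescaled quantity converges to the strictly positive limit $2C_{\alpha_i}/\alpha_i$ as $n_i\to\infty$; a continuity/compactness argument over the remaining finitely many small $n_i$ then yields the claimed uniform positive lower bound $\check{c}>0$.
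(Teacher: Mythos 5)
Your overall template (diagonalize by the multi-dimensional sine transform, use non-positivity of the off-diagonal weights to bound each eigenvalue below by the weight sum with all cosines replaced by $1$, rescale by $h_i^{-\alpha_i}=((n_i+1)/(\hat a_i-\check a_i))^{\alpha_i}$, and get strict positivity from the tail asymptotics of the weights) is the same mechanism the paper uses. However, there is a genuine gap: you have mischaracterized the preconditioner. The $\tau$-matrix based preconditioner of Qu et al.\ that the paper actually analyzes is \emph{not} the plain Bini-type $\tau$-approximation of the fourth-order Toeplitz matrix with weights $w_k^{(\alpha_i)}$; it is the product $\tau_1(\mathbf{W}_{n_i}^{(\alpha_i)})=\mathbf{Q}_{n_i}^{(\alpha_i)}\,\tau(\tilde{\mathbf{W}}_{n_i}^{(\alpha_i)})$, where $\mathbf{Q}_{n_i}^{(\alpha_i)}=\mathbf{I}+\frac{\alpha_i}{24}\,\mathrm{tridiag}(-1,2,-1)$ and $\tilde{\mathbf{W}}_{n_i}^{(\alpha_i)}$ is the \emph{second-order} fractional centered difference matrix with weights $\tilde{w}_k^{(\alpha_i)}$. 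Consequently the eigenvalues of $\mathbf{\hat P}$ are products $\lambda_1^{(j_i)}\lambda_2^{(j_i)}$, not the cosine sums of fourth-order weights you wrote down, and your closed-form expression for $\lambda(k_1,\ldots,k_d)$ does not describe this $\mathbf{\hat P}$. The paper's proof has an extra, essential step that your argument lacks: since the eigenvalues of $\mathbf{Q}_{n_i}^{(\alpha_i)}$ satisfy $\lambda_1^{(j_i)}>1$, one gets $\lambda_{\tau_1}^{(j_i)}>\lambda_2^{(j_i)}$, which reduces the whole problem to bounding the eigenvalues of $\tau(\tilde{\mathbf{W}}_{n_i}^{(\alpha_i)})$, i.e.\ to the second-order weights $\tilde{w}_k^{(\alpha_i)}$ (these are the weights actually meant in the formula for $\check{c}$).

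This matters for your second step as well: the inequality ``replace $\cos$ by $1$'' needs $w_k^{(\alpha_i)}\le 0$ for $k\ge 1$, which you assert as ``standard'' for the fourth-order stencil. It is standard for the fractional centered difference weights $\tilde{w}_k^{(\alpha_i)}$, but for fourth-order (compact-type) schemes the sign structure of the effective weights is exactly what tends to break down — this is why earlier fourth-order analyses were restricted to subranges of $\alpha$ and why the preconditioner here is built as $\mathbf{Q}\,\tau(\tilde{\mathbf{W}})$ rather than as $\tau$ of the fourth-order matrix. So the sign claim would need proof, and your route would at best establish the bound for a different matrix than $\mathbf{\hat P}$. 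Your final ingredient — writing $\tilde{w}_0+2\sum_{k=1}^{n-1}\tilde{w}_k$ as a tail of non-positive terms and using $\tilde{w}_k\sim -C_{\alpha_i}k^{-(1+\alpha_i)}$ to get a limit of order $1/\alpha_i$ — is sound and is essentially what the paper outsources to the appendix of the cited single-sided preconditioning reference; that part is not the issue. To repair the proof, insert the factorization $\mathbf{\hat P}=\sum_{i=1}^d\mathbf{I}_{n_i^-}\otimes\mathbf{Q}_{n_i}^{(\alpha_i)}\tau(\tilde{\mathbf{W}}_{n_i}^{(\alpha_i)})\otimes\mathbf{I}_{n_i^+}$, use $\lambda_1^{(j_i)}>1$ to discard the $\mathbf{Q}$-factor, and then run your cosine and tail estimates on the $\tilde{w}_k^{(\alpha_i)}$.
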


\begin{proof}
From \cite{qu2024novel}, we know that the $\tau$-matrix based preconditioner for $\mathbf{G}$ is defined as 
\begin{equation*}
\mathbf{\hat P}:=\sum_{i=1}^d \mathbf{I}_{n_i^{-}} \otimes \tau_1\left(\mathbf{W}_{n_i}^{\left(\alpha_i\right)}\right) \otimes \mathbf{I}_{n_i^{+}}.
\end{equation*}
The symmetric Toeplitz matrix $\mathbf{W}_{n_i}^{\left(\alpha_i\right)}$ is the discretization matrix of one dimensional $\alpha_i$-th order Riesz derivative and
\begin{equation*}\label{ptau0}
\tau_1\left(\mathbf{W}_{n_i}^{\left(\alpha_i\right)}\right)=\mathbf{Q}_{n_i}^{(\alpha_i)} \tau(\tilde{\mathbf{W}}_{n_i}^{(\alpha_i)}),
\end{equation*}
where 
\begin{equation*}\label{Toeplitz-Q}
\mathbf{Q}_{n_i}^{(\alpha_i)}=\left(\begin{array}{ccccc}
1 &  & & & \\
 & 1 &  & & \\
&  & \ddots &  & \\
& &  & 1 &  \\
& & &  & 1
\end{array}\right)+\frac{\alpha_i}{24}\left(\begin{array}{ccccc}
2 & -1 & & & \\
-1 & 2 & -1 & & \\
& \ddots & \ddots & \ddots & \\
& & -1 & 2 & -1 \\
& & & -1 & 2
\end{array}\right) \in \mathbb{R}^{{n_i} \times {n_i}}
\end{equation*}
and 
$\tau(\tilde{\mathbf{W}}_{n_i}^{(\alpha_i)})$ is the $\tau$-approximation for the matrix $\tilde{\mathbf{W}}_{n_i}^{(\alpha_i)}$ derived from the fractional central difference approximation \cite{CD2} for the Riesz derivative with the first column entries of $\tilde{\mathbf{W}}_{n_i}^{(\alpha_i)}$ being $[\tilde{w}_0^{\left(\alpha_i\right)}, \tilde{w}_1^{\left(\alpha_i\right)},\cdots, \tilde{w}_{n_i}^{\left(\alpha_i\right)}]$.

Since both $\mathbf{Q}_{n_i}^{(\alpha_i)}$ and $ \tau(\tilde{\mathbf{W}}_{n_i}^{(\alpha_i)})$ are $\tau$ matrices and can be diagonalized by the discrete sine transform matrix, we know that
\begin{eqnarray*}\label{decomp_pgamma}
\tau_1(\mathbf{W}_{n_i}^{(\alpha_i)})&=&\left(\mathbf{S}_{n_i} \mathbf{\Lambda}_1 \mathbf{S}_{n_i}\right)\left(\mathbf{S}_{n_i} \mathbf{\Lambda}_2 \mathbf{S}_{n_i}\right)\\
&:=&\mathbf{S}_{n_i} \mathbf{\Lambda}_{\tau_1} \mathbf{S}_{n_i}
\end{eqnarray*}
is also a $\tau$ matrix with the diagonal matrix containing its eigenvalues 
\begin{eqnarray*}
\mathbf{\Lambda}_{\tau_1}&=&\mathbf{\Lambda}_1 \mathbf{\Lambda}_2\\
&=&\operatorname{diag}\left(\lambda_{\tau_1}^{(j_i)}\right)_{j_i=1}^{n_i},
\end{eqnarray*}
where
$\mathbf{\Lambda}_1=\operatorname{diag}\left(\lambda_1^{(j_i)}\right)_{j_i=1}^{n_i}$ and $\mathbf{\Lambda}_2=\operatorname{diag}\left(\lambda_2^{(j_i)}\right)_{j_i=1}^{n_i}$ are the eigenvalues of $\mathbf{Q}_{n_i}^{(\alpha_i)}$ and $ \tau(\tilde{\mathbf{W}}_{n_i}^{(\alpha_i)})$, respectively.
Clearly,
\begin{equation*}
\lambda_1^{(j_i)}>1,~~~ j_i \in 1 \wedge {n_i},
\end{equation*}
which implies
\begin{equation*}
\lambda_{\tau_1}^{(j_i)} > \lambda_2^{(j_i)}~~~ j_i \in 1 \wedge {n_i}.
\end{equation*}
Therefore,
\begin{equation*}
\begin{aligned}
\lambda_{\tau_1}^{(j_i)}>\lambda_2^{(j_i)} & =\frac{1}{h_i^{\alpha_i}} \tilde{w}_0^{\left(\alpha_i\right)}+\frac{2}{h_i^{\alpha_i}} \sum_{k=2}^{n_i} \tilde{w}_{k-1}^{\left(\alpha_i\right)} \cos \left(\frac{\pi j_i(k-1)}{n_i+1}\right) \\
& \geq \frac{1}{h_i^{\alpha_i}} \tilde{w}_0^{\left(\alpha_i\right)}+\frac{2}{h_i^{\alpha_i}} \sum_{k=2}^{n_i} \tilde{w}_{k-1}^{\left(\alpha_i\right)} \\
& =\frac{1}{\left(\hat{a}_i-\check{a}_i\right)^{\alpha_i}} \times\left(n_i+1\right)^{\alpha_i}\left(\tilde{w}_0^{\left(\alpha_i\right)}+2 \sum_{k=1}^{n_i-1} \tilde{w}_k^{\left(\alpha_i\right)}\right) \\
& \geq \frac{1}{\left(\hat{a}_i-\check{a}_i\right)^{\alpha_i}} \times \inf _{n \geq 1}(n+1)^{\alpha_i}\left(\tilde{w}_0^{\left(\alpha_i\right)}+2 \sum_{k=1}^{n-1} \tilde{w}_k^{\left(\alpha_i\right)}\right).
\end{aligned}
\end{equation*}
Note that $(n+1)^{\alpha_i}\left(\tilde{w}_0^{\left(\alpha_i\right)}+2 \sum_{k=1}^{n-1} \tilde{w}_k^{\left(\alpha_i\right)}\right)\gtrsim \frac{1}{\alpha_i}>0$ (see Appendix in \cite{lin2023single} for details). Thus, we conclude that
\begin{equation*}
\inf \limits _{N>0} \lambda_{\min }(\mathbf{\hat P})\geq\check{c}:= \sum_{i=1}^d \frac{K_i}{\left(\hat{a}_i-\check{a}_i\right)^{\alpha_i}} \inf _{n_i \geq 1}(n_i+1)^{\alpha_i}\left(\tilde{w}_0^{\left(\alpha_i\right)}+2 \sum_{k=1}^{n_i-1} \tilde{w}_k^{\left(\alpha_i\right)}\right)
\end{equation*}
with $\check{c}$ being a constant independent of both $M$ and $N$.
\end{proof}~

\end{document}